\documentclass[12pt,reqno]{amsart}
\usepackage{amssymb,amsmath,amsthm,enumerate,bbm}
\usepackage[a4paper]{geometry}

\frenchspacing

\sloppy


\DeclareMathOperator{\sign}{sign}
\DeclareMathOperator{\Ran}{Ran}

\DeclareMathOperator{\Ker}{Ker}
\DeclareMathOperator{\Tr}{Tr}

\DeclareMathOperator{\rank}{rank}
\DeclareMathOperator{\supp}{supp}

\DeclareMathOperator*{\slim}{s-lim}

\DeclareMathOperator{\SHO}{SHO}
\DeclareMathOperator{\const}{const}

\renewcommand\Im{\hbox{{\rm Im}}\,}
\renewcommand\Re{\hbox{{\rm Re}}\,}
\newcommand{\abs}[1]{\lvert#1\rvert}
\newcommand{\Abs}[1]{\left\lvert#1\right\rvert}
\newcommand{\norm}[1]{\lVert#1\rVert}

\newcommand{\bbT}{{\mathbb T}}
\newcommand{\bbR}{{\mathbb R}}
\newcommand{\bbC}{{\mathbb C}}

\newcommand{\bbN}{{\mathbb N}}

\newcommand{\bbS}{{\mathbb S}}

\newcommand{\wh}{\widehat}

\newcommand{\calH}{{\mathcal H}}

\newcommand{\calK}{{\mathcal K}}
\newcommand{\calF}{\mathcal{F}}
\newcommand{\calE}{\mathcal{E}}
\newcommand{\calL}{\mathcal{L}}

\newcommand{\calN}{\mathcal{N}}

\newcommand{\Sch}{\mathbf{S}}

\newcommand{\bZ}{{\mathbf{Z}}}
\newcommand{\bY}{{\mathbf{Y}}}
\newcommand{\bpsi}{{\pmb{\psi}}}
\newcommand{\bchi}{{\pmb{\chi}}}
\newcommand{\bzeta}{{\pmb{\zeta}}}
\newcommand{\bOmega}{{\pmb{\Omega}}}
\newcommand{\bomega}{{\pmb{\omega}}}
\newcommand{\fh}{{\mathfrak{h}}}
\newcommand{\f}{\varphi}


\numberwithin{equation}{section}


\theoremstyle{plain}
\newtheorem{theorem}{\bf Theorem}[section]
\newtheorem*{theorem*}{Theorem 1.1$'$}
\newtheorem{lemma}[theorem]{\bf Lemma}
\newtheorem{proposition}[theorem]{\bf Proposition}
\newtheorem{assumption}[theorem]{\bf Assumption}

\theoremstyle{definition}

\theoremstyle{remark}
\newtheorem*{remark*}{\bf Remark}


\newcommand{\wt}{\widetilde}
\newcommand{\eps}{\varepsilon}

\newcommand{\ac}{\mathrm{ac}}
\newcommand{\1}{\mathbbm{1}}


\begin{document}

\title[Spectral density of difference of spectral projections]{The spectral density of a difference of spectral projections}

\author{Alexander Pushnitski}
\address{Department of Mathematics, King's College London, Strand, London, WC2R~2LS, U.K.}
\email{alexander.pushnitski@kcl.ac.uk}

\subjclass[2010]{47B15; 47B35}

\keywords{Spectral asymptotics; spectral projections; spectral shift function; Hankel operators}

\begin{abstract} 
Let $H_0$ and $H$ be a pair of self-adjoint operators 
satisfying some standard assumptions of scattering
theory. 
It is known from previous work that if $\lambda$ belongs to the absolutely continuous spectrum of 
$H_0$ and $H$, then the difference of spectral projections
$$
D(\lambda)=\1_{(-\infty,0)}(H-\lambda)-\1_{(-\infty,0)}(H_0-\lambda)
$$
in general is not compact and has non-trivial absolutely continuous spectrum. 
In this paper we consider the compact approximations 
$D_\varepsilon(\lambda)$ of $D(\lambda)$,
given by 
$$
D_\varepsilon(\lambda)
=
\psi_\varepsilon(H-\lambda)-\psi_\varepsilon(H_0-\lambda),
$$
where $\psi_\varepsilon(x)=\psi(x/\varepsilon)$ and 
$\psi(x)$ is a smooth real-valued function which 
tends to $\mp1/2$ as $x\to\pm\infty$. 
We prove that the eigenvalues of $D_\varepsilon(\lambda)$ concentrate 
to the absolutely continuous spectrum of $D(\lambda)$ as $\eps\to+0$. 
We show that the rate of concentration is proportional to $|\log\varepsilon|$ and 
give an explicit formula for the asymptotic density of these eigenvalues.
It turns out that this density is independent of $\psi$.
The proof relies on the analysis of Hankel operators. 
\end{abstract}

\maketitle

\section{Introduction}\label{sec.a}

\subsection{Background}
Let $H_0$ and $H$ be self-adjoint operators in a Hilbert space 
such that the difference $H-H_0$ is compact.
Then it is not difficult to show that for any continuous function 
$\f:\bbR\to\bbR$ which tends to zero at infinity, the difference
\begin{equation}
\f(H)-\f(H_0)
\label{a0}
\end{equation}
is also compact. 
However, if $\f$ has discontinuities on the essential spectrum 
of $H_0$ and $H$, then the difference \eqref{a0} may acquire 
non-trivial absolutely continuous (a.c.) spectrum. The first example 
of this kind was constructed by M.~G.~Krein in \cite{Krein}.
He was interested in the difference of spectral projections 
\begin{equation}
D(\lambda)
=
\1_{(-\infty,0)}(H-\lambda)-\1_{(-\infty,0)}(H_0-\lambda);
\label{a3}
\end{equation}
here $\1_{(-\infty,0)}$ is the characteristic function of the 
interval $(-\infty,0)$. 
Krein exhibited an explicit pair of bounded operators $H_0$, $H$
with $\rank(H-H_0)=1$; for $\lambda$ in the a.c. spectrum of $H_0$, 
he computed the difference \eqref{a3} and showed that it is 
not in the Hilbert-Schmidt class, which sufficed for his purposes. 
Later in \cite{KM}, using methods of the theory of Hankel operators, 
it was shown that the operator $D(\lambda)$  in Krein's example 
is not even compact and has non-trivial a.c. spectrum.

In \cite{Push1,PY1,PY2} this phenomenon was studied in a general setting. 
Let us briefly recall the results of this work. Suppose that 
the pair $H_0$, $H$ satisfies some
standard assumptions of smooth scattering theory.
It was shown that for $\lambda$ in the a.c. spectrum of $H_0$ and $H$, 
the spectral structure of $D(\lambda)$ can be described in terms of 
the scattering matrix $S(\lambda)$ for the pair $(H_0,H)$. 
The scattering matrix is a unitary operator in an auxiliary 
Hilbert space $\calN$, which is a fiber space in the spectral representation of $H_0$. 
The space $\calN$ may be finite or infinite dimensional; we denote $N=\dim\calN\leq\infty$. 
Let $\{e^{i\theta_n(\lambda)}\}_{n=1}^N$
be the eigenvalues of $S(\lambda)$ and let us denote
$$
a_n(\lambda)
=
\frac12\abs{e^{i\theta_n(\lambda)}-1}
=
\abs{\sin(\theta_n(\lambda)/2)}
$$
for $\lambda$ in the a.c. spectrum of $H_0$. 
In \cite{Push1,PY1} it was proven that the a.c. spectrum of $D(\lambda)$
can be characterised as the union of the intervals
\begin{equation}
\sigma_\ac(D(\lambda))
=
\bigcup_{n=1}^N [-a_n(\lambda),a_n(\lambda)],
\label{a5}
\end{equation}
where each interval contributes multiplicity one to the a.c. spectrum.
It was also proven that the singular continuous spectrum of $D(\lambda)$ 
is absent and some partial information about the eigenvalues of $D(\lambda)$
was obtained. Finally, in \cite{PY2}, the spectrum of $\f(H)-\f(H_0)$ was studied
for functions $\f$ with several jump discontinuities.

\subsection{Informal description of the main result}

In this paper, we study the regularizations of the difference $D(\lambda)$ 
obtained by replacing the characteristic function $\1_{(-\infty,0)}$
in the definition \eqref{a3} by a smooth function $\psi_\eps$ which 
approaches $\1_{(-\infty,0)}$ as $\eps\to+0$.
More precisely, let $\psi\in C^\infty(\bbR)$ be a real valued function such that 
\begin{equation}
\psi(x)
\to
\begin{cases}
1/2 & \text{ as } x\to-\infty,
\\
-1/2 & \text{ as } x\to\infty.
\end{cases}
\label{a6}
\end{equation}
For $\eps>0$, we denote $\psi_\eps(x)=\psi(x/\eps)$ and consider the difference 
\begin{equation}
D_\eps(\lambda)
=
\psi_\eps(H-\lambda)
-
\psi_\eps(H_0-\lambda).
\label{a7}
\end{equation}
Clearly, we have
$$
\psi_\eps(x)\to\1_{(-\infty,0)}(x)-\tfrac12, 
\quad \eps\to+0, \quad x\not=0
$$
and therefore, if $\lambda$ is not an eigenvalue of $H_0$ or $H$, 
then 
$$
D_\eps(\lambda)\to D(\lambda) 
\quad\text{ strongly as $\eps\to+0$.}
$$
Fix $\lambda$ in the a.c. spectrum of $H_0$. 
By the results of \cite{Push1,PY1}, the a.c. spectrum of $D(\lambda)$ 
is described by the union of the bands \eqref{a5}. 
On the other hand, under our assumptions, the operator $D_\eps(\lambda)$ 
is compact (see Lemma~\ref{lma.b0}) and so it has pure point spectrum.
One expects that the eigenvalues of $D_\eps(\lambda)$ 
concentrate to the spectral bands \eqref{a5} as $\eps\to+0$. 
We show that this is indeed the case
and give a quantitative description of this concentration.

Let us briefly and somewhat informally describe our assumptions
on the self-adjoint operators $H_0$ and $H$; precise statements
will be given in Section~\ref{sec.b}.
We assume that $H_0$ is lower semi-bounded and $H=H_0+V$, 
where $V$ is $H_0$-form compact. 
(Lower semi-boundedness is not essential for our construction but it allows us to 
avoid some unimportant technical issues.) 
We assume that for some
$b\in\bbR$ and for some $k\in\bbN$
$$
(H_0+bI)^{-k}-(H+bI)^{-k}\in\Sch_p,
$$
where $\Sch_p$ is a Schatten class with an exponent $p<\infty$. 
Finally, and most importantly, we make (in the terminology of \cite{Yafaev1})
a strong smoothness assumption. Let us fix an open interval $\delta\subset \bbR$; 
in what follows, the parameter $\lambda$ will be taken inside this interval.  
Roughly speaking, the strong smoothness assumption
means that 
\begin{enumerate}[(i)] 
\item
$H_0$ has a purely a.c. spectrum of a constant multiplicity $N\leq\infty$ on $\delta$;
\item
the operator $\1_\delta(H_0)V\1_\delta(H_0)$ can be 
represented as an integral operator with a sufficiently regular kernel
in the spectral representation of $H_0$. 
\end{enumerate}

\emph{Our main result is as follows:}
Let $g\in C(\bbR)$ be a function that vanishes identically 
in a neighbourhood of zero. Then for every $\eps>0$, the operator $g(D_\eps(\lambda))$
has a finite rank and so its trace is well defined;
we prove that for any $\lambda\in\delta$ one has the asymptotic relation
\begin{equation}
\Tr g(D_\eps(\lambda))
=
\abs{\log\eps} 
\int_{-1}^1 g(y)\mu_\lambda(y)dy
+
o(\abs{\log\eps}), 
\quad
\eps\to+0,
\label{a8}
\end{equation}
where the density function $\mu_\lambda$ is given by 
\begin{equation}
\mu_\lambda(y)
=
\frac1{\pi^2}\sum_{n=1}^N
\frac{\1_{(-a_n(\lambda),a_n(\lambda))}(y)}{\abs{y}\sqrt{1-y^2/a_n^2(\lambda)}},
\quad
y\in(-1,1).
\label{a9}
\end{equation}
\subsection{Discussion}

\emph{Universality:}
Observe that $\mu_\lambda$ is \emph{independent of the choice of $\psi$},
as long as $\psi$ satisfies \eqref{a6}. 
Further,
the density $\mu_\lambda$ is the sum of the functions
each of which is supported on a single band $[-a_n(\lambda), a_n(\lambda)]$.
Each of these functions is a scaled version of the explicit function
$$
\frac1{\pi^2}
\frac{\1_{(-1,1)}(y)}{\abs{y}\sqrt{1-y^2}}, 
\quad
y\in(-1,1).
$$
This can be interpreted as a certain universality phenomenon in this spectral problem. 

We also note that 
by shifts and scaling, it is easy to obtain analogous 
results in the case when the
function $\psi\in C^\infty(\bbR)$ satisfies 
\begin{equation}
\psi(x)\to A_\pm\quad \text{ as $x\to\pm\infty$,}
\label{a9d}
\end{equation} 
for any given values $A_+\not=A_-$.

\emph{Symmetry of $\mu_\lambda$:}
Observe that $\mu_\lambda$ is even, $\mu_\lambda(-y)=\mu_\lambda(y)$.
In particular, \eqref{a8} yields 
\begin{equation}
\Tr g(D_\eps(\lambda))
=
o(\abs{\log\eps}), 
\quad 
\eps\to+0, 
\quad 
\text{ $g$ odd.}
\label{a10}
\end{equation}
We shall give some explanation of this in Section~\ref{sec.c2}.

\emph{Logarithmic rate:}
Let us present a heuristic argument that provides some intuition into the
appearance of the  logarithmic term $\abs{\log\eps}$ in \eqref{a8}.
We use the formalism of the double operator integrals, see e.g. \cite{BSdoi} and references therein.
Fix $\lambda\in\delta$; we have
\begin{equation}
\psi_\eps(H-\lambda)-\psi_\eps(H_0-\lambda)
=
\int_\bbR\int_\bbR
\frac{\psi_\eps(x-\lambda)-\psi_\eps(y-\lambda)}{x-y}d\calE(x,y),
\label{a9a}
\end{equation}
where $\calE(x,y)$ is the operator valued measure on $\bbR\times\bbR$
given by 
$$
\calE(\Delta,\Delta_0)
=
\1_\Delta(H)V\1_{\Delta_0}(H_0), \quad \Delta,\Delta_0\subset\bbR.
$$
Roughly speaking, our strong smoothness assumption on $V$ ensures that 
the measure $\calE$ is sufficiently regular on  $\delta_0\times\delta_0$,
where $\delta_0\subset \delta$ is an open set which contains $\lambda$. 
``Sufficiently regular'' in this context means that $d\calE(x,y)=\calE'(x,y)dxdy$,
where the norm $\norm{\calE'(x,y)}_p$ in an appropriate Schatten class $\Sch_p$
is bounded uniformly in $(x,y)\in\delta_0\times\delta_0$.

From the regularity of $\calE$
it follows that the singular behaviour of the operator \eqref{a9a} as $\eps\to+0$
is determined entirely by the behaviour of the function 
$\frac{\psi_\eps(x-\lambda)-\psi_\eps(y-\lambda)}{x-y}$ near $x=y=\lambda$ (and not by the measure $\calE$). 
To see why $\abs{\log \eps}$ appears in the asymptotics \eqref{a8}, let us 
compute the Hilbert-Schmidt norm of the operator in \eqref{a9a}:
\begin{multline*}
\Tr(D_\eps(0))^2
=
\norm{\psi_\eps(H-\lambda)-\psi_\eps(H_0-\lambda)}_2^2
\\
=
\int_{\delta_0}\int_{\delta_0}
\Abs{\frac{\psi_\eps(x-\lambda)-\psi_\eps(y-\lambda)}{x-y}}^2\norm{\calE'(x,y)}_2^2 \, dx dy+O(1)
\end{multline*}
as $\eps\to+0$. 
Assuming that  
$\norm{\calE'(x,y)}_2$ is bounded uniformly in $(x,y)\in\delta_0\times\delta_0$, 
we end up with estimating the integral 
$$
\int_{\delta_0}\int_{\delta_0}
\Abs{\frac{\psi_\eps(x-\lambda)-\psi_\eps(y-\lambda)}{x-y}}^2 dx dy.
$$
An elementary calculation (using our assumption \eqref{a6} on the asymptotic behaviour of $\psi$)
shows that this integral has the asymptotics $2\abs{\log \eps}+O(1)$ as $\eps\to+0$. 

\emph{Comparison with other estimates:}
Under somewhat more restrictive assumptions on $\psi$ (see \eqref{a6aa}), 
the asymptotics \eqref{a8} is valid for $g(t)=t^m$, where $m$ is a sufficiently 
large integer (this follows from the first step of the proof in Section~\ref{sec.j}).
Taking $m$ even, directly from \eqref{a8} we obtain an estimate in the Schatten class $\Sch_m$:
\begin{equation}
\norm{\psi_\eps(H-\lambda)-\psi_\eps(H_0-\lambda)}_m^m
=
\Tr (D_\eps(\lambda))^m
=
O(\abs{\log\eps}), 
\quad \eps\to+0.
\label{a9b}
\end{equation}
On the other hand, one can find estimates of the type
\begin{equation}
\norm{\f(H)-\f(H_0)}_m
\leq C\norm{\f}_* \norm{H-H_0}_m
\label{a9c}
\end{equation}
in the literature;
here $\norm{\f}_*$ is the norm of $\f$ in an appropriate function class. 
There are several variations of \eqref{a9c}: with a different Schatten 
norm on the right-hand side (r.h.s.), with a different power of $\norm{H-H_0}$ in the r.h.s., etc; see e.g. \cite{AP,PS}
and references therein.
Substituting $\f(x)=\psi_\eps(x-\lambda)$ into any of the estimates of the type \eqref{a9c}, one \emph{does not}
recover the logarithmic behaviour \eqref{a9b}.
In fact, the best one can get in this way is $O(\eps^{-\alpha})$ with $\alpha>0$. 
This is not surprising, because estimates of the type \eqref{a9c}
are valid for \emph{all} pairs of operators $H_0$, $H$, whereas we use the 
crucial strong smoothness assumption. 
This aspect is further illustrated in the following example.

\emph{Example:}
To show what can happen with $\Tr g(D_\eps(\lambda))$ without any structural
assumptions on $H_0$ and $H$, let us consider the following example. 
Let $H_0=0$ and let $H\in\Sch_p$ be a compact self-adjoint operator with the eigenvalues
$\{\lambda_n\}_{n=1}^\infty$.
Assume $\psi(0)=0$; then 
$$
\Tr g(D_\eps(0))
=
\Tr g(\psi(H/\eps))
=
\sum_{n=1}^\infty g(\psi(\lambda_n/\eps)).
$$
Suppose that $\psi(t)=-1/2$ for $t\geq 1$ and $g\geq0$, $g(-1/2)=1$. 
Then the r.h.s. can be estimated below as
$$
\sum_{n=1}^\infty g(\psi(\lambda_n/\eps))
\geq 
\#\{n: \lambda_n>\eps\}.
$$
Thus, by choosing the sequence $\{\lambda_n\}_{n=1}^\infty$ appropriately,
we can make the r.h.s. behave as $\eps^{-\alpha}$ with any $\alpha<p$.

\emph{Applications:}
In Section~\ref{sec.x} 
we give some examples of applications of the main result to the Schr\"odinger operator. 

\emph{Connection with the trace formula:}
Suppose that $H-H_0$ is a trace class operator. 
In  \cite{Krein}, Krein proved that 
there exists a real-valued function $\xi\in L^1(\bbR)$ (called the 
spectral shift function) such that
the following trace formula holds true:
$$
\Tr(\f(H)-\f(H_0))
=
\int_{-\infty}^\infty \xi(x)\f'(x)dx, 
$$
for all smooth functions $\f$ of a certain class. 
Taking \emph{formally} $\varphi(x)=\1_{(\infty,0)}(x-\lambda)$ 
and observing that in this case $\varphi'(x)=-\delta(x-\lambda)$,
we obtain the ``naive trace formula''
\begin{equation}
\Tr D(\lambda)=-\xi(\lambda).
\label{a1b}
\end{equation}
Since $D(\lambda)$ in general fails to belong to trace class, 
the naive formula \eqref{a1b} does not make sense as it is. 
However, it remains a source of inspiration in this area and 
several regularisations have been considered in the literature
(see e.g. \cite{Nakamura,FLLS}). 
One regularisation is to take $\varphi(x)=\psi_\eps(x-\lambda)$;
since 
$-\psi_\eps'(x-\lambda)$ converges to the delta 
function $\delta(x-\lambda)$, 
we obtain
\begin{equation}
\Tr D_\eps(\lambda)
=
\int_{-\infty}^\infty \xi(x)\psi_\eps'(x-\lambda)dx
\to
-\xi(\lambda),
\label{a1a}
\end{equation}
if $\lambda$ is a Lebesgue point of $\xi$. 
The main result of this paper is a step towards a better understanding of \eqref{a1a}. 

\emph{A conjecture:}
Taking formally $g(t)=t$ in \eqref{a8} (or in \eqref{a10}) and comparing with \eqref{a1a}
suggests that in the trace class case, 
one can hope to replace the error term $o(\abs{\log\eps})$ in \eqref{a8}
(or in \eqref{a10})
by $\const+o(1)$, where the constant is related to the spectral
shift function.

\emph{Related work:}
Much of our construction uses the ideas of \cite{PY2}. 
However, the nature of the results is quite different: \cite{PY2} 
describes the a.c. spectrum of $D(\lambda)$, whereas here we deal
exclusively with the point spectrum. 

In \cite{Push2}, the spectrum of $D_\eps(\lambda)$ was considered
for functions $\psi(x)$ that tend to zero as $\abs{x}\to\infty$. 
In this case, no spectral concentration for $D_\eps(\lambda)$ occurs. 
Instead, the eigenvalues of $D_\eps(\lambda)$
converge to some ``limiting spectrum'', which is described as the spectrum of a certain  
\emph{compact} model operator. This model operator depends
on the scattering matrix $S(\lambda)$ and also \emph{depends on the choice of $\psi$.}
Thus, in this case the universality phenomenon discussed above does not hold.  
To comment on this, we note that the case $\psi(x)\to0$ as $\abs{x}\to\infty$ 
can be considered as a ``degenerate case'' $A_+=A_-=0$ of the function of
the type \eqref{a9d}.
Thus, roughly speaking, the problem discussed in \cite{Push2}
corresponds to the (conjectural) next term in the asymptotics 
 \eqref{a8}.

In \cite{FP} the products of spectral projections 
$$
\Pi_\eps(\lambda)
=
\1_{(-\infty,-\eps)}(H_0-\lambda)
\1_{(\eps,\infty)}(H-\lambda)
\1_{(-\infty,-\eps)}(H_0-\lambda)
$$
are considered under some assumptions similar to the ones of this paper. 
These products are compact, while the limiting product
$$
\Pi(\lambda)
=
\1_{(-\infty,0)}(H_0-\lambda)
\1_{(0,\infty)}(H-\lambda)
\1_{(-\infty,0)}(H_0-\lambda)
$$
in general has a non-trivial a.c. spectrum. 
Similarly to \eqref{a5}, this spectrum can be described as the union of bands
\begin{equation}
\sigma_\ac(\Pi(\lambda))
=
\bigcup_{n=1}^N [0,a_n(\lambda)^2];
\label{a12c}
\end{equation}
this fact was established in \cite{Push1,PY1}. 
In \cite{FP} it is proved that the eigenvalues of $\Pi_\eps(\lambda)$ 
accumulate to the spectral bands \eqref{a12c} in a manner similar to 
\eqref{a8}. The technique of \cite{FP} is quite different from the one of this paper,
although it also relies on the analysis of Hankel operators.

\subsection{Key ideas of the proof}
We study the operator $D_\eps(\lambda)$ in the spectral 
representation of $H_0$. For the simplicity of notation, let us take 
$\lambda=0$ in \eqref{a8} (the general case reduces to this one 
by a shift). 
Let $\chi_0\in C_0^\infty(\bbR)$ be a real valued 
function equal to $1$ in a neighbourhood of zero. Roughly speaking, 
after a number of reductions
we show that the spectrum of $D_\eps(0)$ is accurately approximated
by the spectrum of the compact self-adjoint operator
\begin{equation}
P_-\bpsi_\eps\bchi_0 P_+\otimes (S(0)-I)
+
P_+\bpsi_\eps\bchi_0 P_-\otimes (S(0)^*-I)
\quad \text{ in } L^2(\bbR)\otimes \calN.
\label{a13}
\end{equation}
Here $P_\pm$ are the orthogonal projections onto the Hardy classes
$H^2_\pm(\bbR)\subset L^2(\bbR)$ (see  Section~\ref{sec.a5}),
 $\calN$ is the Hilbert space 
where the scattering matrix $S(0)$ acts, and $\bpsi_\eps$ (resp. $\bchi_0$) 
is the operator of multiplication by the function $\psi_\eps(x)$ (resp. $\chi_0(x)$)
in $L^2(\bbR,dx)$. 
The operators with the structure \eqref{a13} are called \emph{symmetrised
Hankel operators} (SHO) in \cite{PY2}; they were introduced 
in connection with the study of the spectrum of $\f(H)-\f(H_0)$ 
with piecewise continuous $\f$. 

Formula \eqref{a13} already depends on $S(0)$ in an explicit way. 
In order to deal with the part of the operator \eqref{a13} 
acting on $L^2(\bbR)$, 
we show, roughly speaking, that the relevant spectral asymptotics
is independent of the choice
of the function $\psi$, as long as $\psi(x)$ approaches $\mp 1/2$ as $x\to\pm\infty$. 
This allows us to replace $\psi$ by the explicit function
$$
-\tfrac1\pi\tan^{-1}(x).
$$
In this case, we are able to determine the spectral asymptotics by 
reducing the problem to the analysis of a simple explicit Hankel operator. 
See Sections~\ref{sec.c7}, \ref{sec.c8} for the details of the last step.

\subsection{Notation}\label{sec.a5}
We denote by $\Sch_p$, $p\geq1$, the standard Schatten class
and by $\norm{\cdot}_p$ the norm in this class. 
We will frequently use the H\"older inequality for Schatten classes:
\begin{equation}
\norm{XY}_r\leq \norm{X}_p\norm{Y}_q,
\quad
\tfrac1r=\tfrac1p+\tfrac1q.
\label{a14}
\end{equation}
$\mathbf B$ denotes the class of all bounded operators,
$\Sch_\infty$ is the class of all compact operators and $\norm{\cdot}$ 
is the operator norm. 
For a set $\delta\subset \bbR$, $\1_\delta$ denotes the characteristic
function of $\delta$. 
If $X$ is a Banach space, we denote by $L^p(\bbR,X)$, 
$C(\bbR,X)$ etc. the classes of $X$-valued functions on $\bbR$. 

Let $\fh_1$, $\fh_2$ be Hilbert spaces, and let 
$\Omega$ be a function on $\bbR$ with values in the set of bounded operators
acting from $\fh_1$ to $\fh_2$. 
Assume that $\Omega\in L^\infty(\bbR,\mathbf B)$.  
We associate with $\Omega$ the bounded
operator (which will be denoted by the corresponding boldface symbol) 
$$
\bOmega: L^2(\bbR,\fh_1)\to L^2(\bbR,\fh_2)
$$
acting as ``the multiplication by $\Omega$":
\begin{equation}
(\bOmega f)(x)=\Omega(x)f(x), 
\quad
x\in\bbR, 
\quad
f\in L^2(\bbR,\fh_1).
\label{a12}
\end{equation}

We will work with the standard Hardy classes $H^2_\pm(\bbR)\subset L^2(\bbR)$ 
defined as the classes of functions $f\in L^2(\bbR)$ that admit
an analytic continuation into the half-plane 
$\bbC_\pm=\{z\in\bbC: \pm\Im z>0\}$ and satisfy the estimate
$$
\sup_{y>0}\int_{-\infty}^\infty\abs{f(x\pm iy)}^2 dx<\infty.
$$
We denote by $P_\pm: L^2(\bbR)\to H^2_\pm(\bbR)$ the Hardy projections. 
Recall that the explicit formula for $P_\pm$ is
\begin{equation}
(P_\pm f)(x)
=
\mp \frac1{2\pi i} \lim_{\epsilon\to+0}
\int_{-\infty}^\infty
\frac{f(x')}{x-x'\pm i\epsilon}dx', 
\quad 
\text{ a.e. $x\in\bbR$.}
\label{a11}
\end{equation}

\subsection{The structure of the paper}
In Section~\ref{sec.b} we state our assumptions and present 
the main result of the paper (Theorem~\ref{thm.b1}). 
In Section~\ref{sec.c} we describe the plan of the proof, define 
all the main objects appearing in our construction and state 
the main steps of the proof as lemmas. 
These lemmas are proven in Sections~\ref{sec.d}--\ref{sec.i}. 
The proof is concluded in Section~\ref{sec.j}. 
Applications are discussed in Section~\ref{sec.x}.

\section{Main result}\label{sec.b}

\subsection{Assumptions}\label{sec.b1}
Let $H_0$ be a  self-adjoint lower semi-bounded 
operator in a Hilbert space $\calH$. 
Let the perturbation $V$ be of the form
$$
V=G^*V_0G \quad \text{ in $\calH$.}
$$
Here
$G$ is an operator from $\calH$ to
an auxiliary Hilbert space $\calK$
and $V_0=V_0^*$ is a bounded operator in $\calK$
(of course, the simplest case is $\calK=\calH$, $G=\abs{V}^{1/2}$ and $V_0=\sign(V)$).
We assume that the operator $G$ satisfies 
\begin{equation}
G(H_0+bI)^{-1/2}\in \Sch_{\infty}, \quad
b>-\inf \sigma(H_0).
\label{b2}
\end{equation}
Condition \eqref{b2} ensures that $V$ is $H_0$-form compact, 
and so the perturbed operator 
$$
H=H_0+V\quad \text{ in $\calH$.}
$$ 
may be defined as a form sum. 
From \eqref{b2} it is easy to derive
\begin{lemma}\label{lma.b0}
Let \eqref{b2} hold true and 
let $\psi\in C^\infty(\bbR)$ satisfy \eqref{a6}. 
Then for any $\lambda\in\bbR$ and for any $\eps>0$, the operator
$D_\eps(\lambda)$ is compact. 
\end{lemma}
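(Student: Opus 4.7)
The plan is to reduce the statement to a standard compact-perturbation argument, applied not to $H$ and $H_{0}$ themselves but to their bounded resolvents. The conceptual difficulty is that $\psi_{\eps}$ does not belong to $C_{0}(\bbR)$, so Stone--Weierstrass cannot be invoked directly on $H$ and $H_{0}$; the saving feature is \eqref{a6}, which says exactly that the composition of $\psi_{\eps}$ with $t\mapsto t^{-1}-b$ extends continuously at $t=0$ (with value $-1/2$). This matches the fact that $0$ is the ``point at infinity'' of the resolvent spectrum.

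First I would verify that \eqref{b2} gives compactness of the resolvent difference. Writing $V=G^{*}V_{0}G$ and setting $B:=(H_{0}+bI)^{-1/2}G^{*}V_{0}G(H_{0}+bI)^{-1/2}$, the factor $G(H_{0}+bI)^{-1/2}$ is compact by \eqref{b2}, hence $B\in\Sch_{\infty}$. By KLMN, $V$ is infinitesimally form bounded with respect to $H_{0}$, so $H=H_{0}+V$ is self-adjoint and bounded below; enlarging $b$ we may assume that both $H_{0}+bI$ and $H+bI$ are $\geq I$. The form-resolvent identity then gives
\[
(H+bI)^{-1}-(H_{0}+bI)^{-1}=-(H_{0}+bI)^{-1/2}(I+B)^{-1}B(H_{0}+bI)^{-1/2}\in\Sch_{\infty}.
\]

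Next, set $R_{0}=(H_{0}+bI)^{-1}$, $R=(H+bI)^{-1}$, so that $\sigma(R_{0})\cup\sigma(R)\subset[0,M]$ for some $M\leq 1$, and introduce
\[
F(t):=\psi_{\eps}\bigl(t^{-1}-b-\lambda\bigr),\quad t\in(0,M],\qquad F(0):=-\tfrac12.
\]
By \eqref{a6}, $F\in C([0,M])$, and by the spectral mapping theorem $F(R)=\psi_{\eps}(H-\lambda)$, $F(R_{0})=\psi_{\eps}(H_{0}-\lambda)$, so $D_{\eps}(\lambda)=F(R)-F(R_{0})$. Stone--Weierstrass provides polynomials $P_{k}\to F$ uniformly on $[0,M]$; for each polynomial, the telescoping identity
$R^{n}-R_{0}^{n}=\sum_{j=0}^{n-1}R^{j}(R-R_{0})R_{0}^{n-1-j}$
together with the first step shows $P_{k}(R)-P_{k}(R_{0})\in\Sch_{\infty}$, and passing to the norm limit in the closed ideal $\Sch_{\infty}$ concludes the argument.

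The only mildly delicate point is the justification of the form-resolvent identity in the first step, since $V$ need not be an operator on $\calH$; this is a standard consequence of the KLMN construction of the form sum. I would also remark that the smoothness of $\psi$ plays no role in this lemma --- continuity of $\psi$ together with the two prescribed limits at $\pm\infty$ is all that is needed for the compactness conclusion.
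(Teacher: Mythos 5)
Your proof is correct and follows essentially the same route as the paper's: pass to the bounded resolvents $(H+bI)^{-1}$, $(H_0+bI)^{-1}$, observe that the transferred function $F(t)=\psi_\eps(t^{-1}-b-\lambda)$ extends continuously to $t=0$ precisely because of \eqref{a6}, get compactness of the resolvent difference from \eqref{b2} via the resolvent identity, and finish by Weierstrass approximation in the ideal $\Sch_\infty$. The paper's Lemma~\ref{lma.d1}(i) packages the first step slightly differently (it subtracts a constant from $\f$ so that the transferred function vanishes at $t=0$, rather than carrying $F(0)=-1/2$ through), but this is a cosmetic distinction, and your closing remark that only continuity of $\psi$ with the two limits is used agrees with the paper, which proves the statement for $\f\in C(\bbR)$.
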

The proof is given in Section~\ref{sec.d}.

Next, we assume that for some
$p<\infty$, some $b>-\min\{\inf\sigma(H_0),\inf\sigma(H)\}$ and some $k\in\bbN$, one has
\begin{equation}
(H+bI)^{-k}-(H_0+bI)^{-k}\in \Sch_{p}.
\label{b2a}
\end{equation}
Further, we describe the strong smoothness assumption. 
Let $\delta\subset \bbR$ be an open interval and let $\overline{\delta}$ be the closure of $\delta$. 
Assume that the spectrum of $H_0$ on $\delta$ is purely a.c. with a 
constant multiplicity $N\leq\infty$. 
More explicitly, we assume that for some auxiliary Hilbert space $\calN$
with $\dim\calN=N$, there exists a unitary operator
$$
\calF:\Ran \1_{\delta}(H_0)\to L^2(\delta,\calN)
$$
such that $\calF$ diagonalises $H_0$:
$$
(\calF H_0 f)(\lambda)=\lambda \wt f(\lambda), \quad 
\lambda\in \delta,
\quad
\wt f=\calF f,
$$
for any $f\in\Ran \1_{\delta}(H_0)$.
Further, for $\lambda\in \delta$, let $Z(\lambda): \calK\to\calN$ 
be the operator formally defined by the relation 
\begin{equation}
Z(\lambda)u=(\calF G^* u)(\lambda), \quad \lambda\in\delta, \quad u\in\calK.
\label{b3a}
\end{equation}
We assume that $Z(\lambda)\in\Sch_{2p}$ and that the estimates 
\begin{equation}
\norm{Z(\lambda)}_{2p}\leq C,
\quad
\norm{Z(\lambda)-Z(\lambda')}_{2p}\leq C\abs{\lambda-\lambda'}^\gamma,
\quad 
\lambda, \lambda'\in \overline{\delta}
\label{b5}
\end{equation}
are satisfied with some exponent $\gamma\in(0,1)$. 
In other words, $Z\in C^\gamma(\overline{\delta},\Sch_{2p})$. 
Formula \eqref{b3a} implies that $G$ acts upon any $f\in\Ran\1_{\delta}(H_0)$
according to the formula
\begin{equation}
Gf = \int_{\delta} Z(\lambda)^*\wt f(\lambda)d\lambda,
\quad 
\wt f=\calF f\in L^2(\delta,\calN). 
\label{b4}
\end{equation}
This is slightly stronger than what is called strong smoothness
in \cite[Section 4.4]{Yafaev1}; strong smoothness refers to the case when the norm in \eqref{b5}
is the operator norm. It is natural to call the above assumption the 
$\Sch_{2p}$-valued strong smoothness. 
In applications, this assumption is easy to verify, see Section~\ref{sec.x}. 

Let us summarize our assumptions:

\begin{assumption}\label{ass1}
\begin{enumerate}[\rm (1)]
\item
$H_0$ is lower semi-bounded and 
$H=H_0+V$, where $V=G^*V_0G$ satisfies \eqref{b2}. 

\item
For some $b>-\min\{\inf\sigma(H_0),\inf\sigma(H)\}$, 
$p<\infty$ and $k\in\bbN$, we have the inclusion \eqref{b2a}.

\item
$H_0$ has a purely a.c. spectrum with multiplicity $N$ on the interval $\delta$. 

\item
$G$ satisfies the $\Sch_{2p}$-valued strong smoothness assumption
\eqref{b3a}, \eqref{b5} on $\delta$.
\end{enumerate}
\end{assumption}

\begin{remark*}
The assumption that $H_0$ and $H$ are lower semi-bounded is not essential
for our construction. 
We choose to impose this assumption here simply because it allows us to avoid
several irrelevant technical issues and to make the exposition more
transparent. 
In \cite{PY2}, related analysis of $D(\lambda)$ is carried out 
without the lower semi-boundedness assumption. 
\end{remark*}

\subsection{Scattering theory}\label{sec.b2}
For $\Im z\not=0$, we set
\begin{equation}
R(z)=(H-z)^{-1}, 
\quad
R_0(z)=(H_0-z)^{-1}, 
\quad
T(z)=GR_0(z)G^*.
\label{b6}
\end{equation}
Since $G$ is not assumed bounded, the precise definition of $T(z)$ is 
$$
T(z)=G(H_0+bI)^{-1/2}\frac{H_0+bI}{H_0-zI}(G(H_0+bI)^{-1/2})^*.
$$
By the assumption \eqref{b2}, it follows that $T(z)$ is compact.
We need the following well-known results, see e.g. \cite[Section~4]{Yafaev1}.

\begin{proposition}\label{pr.b1}
Let Assumption~\ref{ass1} hold. 
Then the operator-valued function $T(z)$   
defined by \eqref{b6}
is H\"older continuous (with the exponent $\gamma$) in the operator norm
for $\Re z\in \delta$, $\Im z\geq 0$; 
in particular, the limits $T(\lambda+i0)$ exist in $\Sch_\infty$  and 
are H\"older continuous in $\lambda\in \delta$.
Let $\delta_*\subset \delta$ be the set of $\lambda$ such that
the equation 
\begin{equation}
f+T(\lambda+ i0)V_0f=0
\label{b10}
\end{equation} 
has a non-trivial solution $f\in\calK$, and let $\delta_0=\delta\setminus\delta_*$.
Then $\delta_*$ 
is closed in $\delta$  and  
has the Lebesgue measure zero.
The inverse operator $(I+T(\lambda+i0)V_0)^{-1}$, $\lambda\in\delta_0$, 
exists, is bounded and is a H\"older continuous function of  
$\lambda\in\delta_0$.
Finally, 
the local wave operators
$$
W_\pm=
W_\pm(H,H_0;\delta_0)
=
\slim_{t\to \pm \infty}e^{iHt}e^{-iH_{0}t}\1_{\delta_0}(H_0)
$$
exist and are complete, i.e.
$\Ran W_\pm=\Ran \1_{\delta_0}(H)$.
\end{proposition}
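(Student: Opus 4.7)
The plan is to verify the three statements (boundary behaviour of $T$, structure of $\delta_*$, existence and completeness of $W_\pm$) by the standard stationary machinery of smooth scattering theory.

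First I would establish H\"older continuity of $T(z)$ up to the real axis by decomposing
$$
T(z)=G\1_{\delta}(H_0)R_0(z)G^* + G\1_{\bbR\setminus\delta}(H_0)R_0(z)G^*.
$$
The second summand is operator-norm analytic for $\Re z\in\delta$, since the spectrum of $H_0\1_{\bbR\setminus\delta}(H_0)$ is closed and disjoint from $\delta$. For the first summand, the spectral representation $\calF$ combined with \eqref{b3a} produces a vector-valued Cauchy integral
$$
G\1_\delta(H_0)R_0(z)G^*=\int_\delta\frac{Z(\lambda')^*Z(\lambda')}{\lambda'-z}\,d\lambda'.
$$
By \eqref{b5} and the H\"older inequality \eqref{a14}, the density belongs to $C^\gamma(\overline\delta,\Sch_p)$; existence of the boundary values $T(\lambda+i0)$ and their H\"older continuity in $\lambda\in\delta$ then follow from the Plemelj--Sokhotski formula and the operator-valued version of Privalov's theorem.

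Next, $B(\lambda):=I+T(\lambda+i0)V_0$ is a H\"older-continuous family of Fredholm operators of index zero on $\overline\delta$. Closedness of $\delta_*$ I would obtain by the usual compactness argument: if $\lambda_n\to\lambda$ in $\delta_*$ and $B(\lambda_n)f_n=0$ with $\norm{f_n}=1$, then norm convergence of $T(\lambda_n+i0)$ together with compactness of $T(\lambda+i0)V_0$ yields, along a subsequence, a unit vector $f\in\ker B(\lambda)$. For the measure-zero claim I would use that any $\lambda\in\delta_*$ produces, via $R_0(\lambda+i0)G^*V_0 f$, an eigenvector of $H$ at $\lambda$, so that $\delta_*$ consists of embedded eigenvalues and is negligible under the $\Sch_{2p}$-valued strong smoothness hypothesis (as spelled out in \cite[Ch.~4]{Yafaev1}). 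H\"older continuity of $B(\lambda)^{-1}$ on $\delta_0$ then follows from the second resolvent identity applied to $B$ and local boundedness of the inverse.

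Finally, existence and completeness of $W_\pm$ I would extract from the resolvent identity
$$
R(z)=R_0(z)-R_0(z)G^*V_0 B(z)^{-1}GR_0(z),
$$
which, together with the boundary behaviour of $T$, promotes the above information to a limiting absorption principle for $H$ on $\delta_0$. The latter implies that $G$ is both $H_0$-smooth and $H$-smooth on every compact subset of $\delta_0$, whence the Kato--Birman smooth perturbation theorem delivers existence and completeness of the local wave operators. The one non-routine ingredient in this chain is the measure-zero assertion for $\delta_*$: the Privalov step, the analytic Fredholm step and the Kato smoothness step are textbook, whereas ruling out large sets of embedded eigenvalues genuinely uses the strong smoothness hypothesis and a careful Fredholm-analytic argument across the real axis.
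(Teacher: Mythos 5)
The paper itself gives no proof of Proposition~\ref{pr.b1}; it records it as a known result from \cite[Section~4]{Yafaev1}, so what you are doing is reconstructing Yafaev's argument. Most of your sketch matches that standard stationary route: the decomposition of $T(z)$ into a compactly supported Cauchy integral plus a part analytic near $\overline\delta$, the operator-valued Privalov step for the boundary values and their H\"older continuity, the compactness argument for closedness of $\delta_*$, the second resolvent identity yielding H\"older continuity of $(I+T(\lambda+i0)V_0)^{-1}$ on $\delta_0$, and the limiting absorption principle feeding into the Kato--Birman smooth perturbation theorem for the local wave operators. All of that is in order and is the same route the cited reference takes.

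There is, however, a gap in the measure-zero step as you have written it. You claim that every $\lambda\in\delta_*$ produces, via $u=R_0(\lambda+i0)G^*V_0f$, a genuine eigenvector of $H$, and hence $\delta_*$ is contained in the (countable) point spectrum. The usual ``reality trick'' does give $Z(\lambda)V_0f=0$: pairing $f+T(\lambda+i0)V_0f=0$ with $V_0f$ and using $\Im T(\lambda+i0)=\pi Z(\lambda)^*Z(\lambda)$. But to conclude that $u\in\calH$ you must check that in the spectral representation the function $\lambda'\mapsto Z(\lambda')V_0f/(\lambda'-\lambda)$ is square-integrable near $\lambda'=\lambda$. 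Given only the H\"older bound $\norm{Z(\lambda')V_0f}\le C\abs{\lambda'-\lambda}^\gamma$ from \eqref{b5}, the integral $\int\abs{\lambda'-\lambda}^{2\gamma-2}\,d\lambda'$ converges only if $\gamma>1/2$. Since the paper assumes only $\gamma\in(0,1)$, the bootstrap identification of $\delta_*$ with embedded eigenvalues does not go through for $\gamma\le 1/2$, and the measure-zero claim needs a different mechanism in that range (analyticity of $(I+T(z)V_0)^{-1}$ in the open half-plane together with a uniqueness-type theorem for boundary values of operator-valued analytic functions, which is what Yafaev actually does). You correctly flag the measure-zero assertion as the non-routine ingredient, but the specific route you propose for it is insufficient under the stated hypotheses.
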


The local scattering operator 
$$
\mathbf S=W_+^*W_-
$$
is unitary on $\Ran \1_{\delta_0}(H_0)$ and commutes with $H_0$. 
Thus, we have a representation 
$$
(\calF \mathbf S \calF^* u)(\lambda)
=
S(\lambda)u(\lambda), 
\quad \text{ a.e. $\lambda\in \delta_0$, \    $u\in L^2(\delta_0,\calN)$}
$$
where the operator $S(\lambda)$ in $\calN$ is called the 
scattering matrix for the pair $H_0$, $H$. 
The scattering matrix is unitary in $\calN$. 
The difference $S(\lambda)-I$ is a compact operator (in fact, it belongs
to $\Sch_p$, see \eqref{c18}). 
Thus, the spectrum of $S(\lambda)$ consists of eigenvalues 
on the unit circle accumulating possibly only to the point $1$. 
As in Section~\ref{sec.a}, we denote the eigenvalues of $S(\lambda)$ 
by   $\{e^{i\theta_n(\lambda)}\}_{n=1}^N$, $N\leq\infty$, and use the 
notation $a_n(\lambda)=\tfrac12\abs{e^{i\theta_n(\lambda)}-1}$.

\subsection{Main result}\label{sec.b3}

\begin{theorem}\label{thm.b1}
Let Assumption~\ref{ass1} hold true, and let $\lambda\in\delta_0$
(the set $\delta_0$ is defined in Proposition~\ref{pr.b1}). 
Let $\psi\in C^\infty(\bbR)$ satisfy \eqref{a6} and let $D_\eps(\lambda)$ 
be defined by \eqref{a7}. 
Let $g=\1_\omega$, where $\omega\subset\bbR$ is an open interval 
such that $0\notin\overline{\omega}$. 
Then the asymptotic relations
\begin{align}
\lim_{\eps\to+0}\, 
\abs{\log\eps}^{-1}\Tr g(D_\eps(\lambda))
&=
\frac1{2\pi^2}\sum_{n=1}^N 
\int_{-\infty}^\infty 
\biggl\{
g\biggl(\frac{a_n(\lambda)}{\cosh x}\biggr)
+
g\biggl(\frac{-a_n(\lambda)}{\cosh x}\biggr)
\biggr\}
dx
\label{b7}
\\
&=
\int_{-1}^1 g(y)\mu_\lambda(y)dy,
\label{b8}
\end{align}
hold true, 
where $\mu_\lambda$ is given by \eqref{a9}. 
The relations \eqref{b7}, \eqref{b8} also hold true for any 
$g\in C(\bbR)$ that vanishes in a neighbourhood of the origin.
\end{theorem}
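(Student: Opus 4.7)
The plan, following the roadmap sketched in Section~1.4, is to reduce $D_\eps(\lambda)$ in the spectral representation of $H_0$, by a sequence of Schatten-controlled approximations, to an explicit symmetrised Hankel operator whose eigenvalue asymptotics can be read off from a classical computation. A spectral shift lets me take $\lambda=0$ throughout.

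The first stage is the reduction of $\Tr g(D_\eps(0))$ to $\Tr g(\calM_\eps)+O(1)$, where $\calM_\eps$ is the SHO of \eqref{a13}. I would start from the double operator integral
\begin{equation*}
D_\eps(0)=\iint\tfrac{\psi_\eps(x)-\psi_\eps(y)}{x-y}\,d\calE(x,y),
\end{equation*}
expand $d\calE$ via the resolvent identity into terms involving $G$, $V_0$ and the boundary values $T(\lambda\pm i0)$ supplied by Proposition~\ref{pr.b1}, and translate to the spectral representation using \eqref{b4}. The $\Sch_{2p}$-H\"older smoothness \eqref{b5} of $Z$, combined with the operator-norm H\"older continuity of $T(\lambda+i0)$, lets me freeze all slowly-varying kernels at $\lambda=0$: the resulting errors are $\Sch_m$-bounded uniformly in $\eps$ for $m$ sufficiently large in terms of $p$, and because $g$ vanishes near $0$, any uniformly $\Sch_m$-bounded perturbation changes $\Tr g(\cdot)$ only by $O(1)$. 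A smooth cutoff $\bchi_0$ localises the $(x,y)$-integration near $0$, and the standard scattering identity $S(0)=I-2\pi i\, Z(0)(I+T(+i0)V_0)^{-1}V_0 Z(0)^*$ produces precisely the $(S(0)-I)\otimes\calN$ factor of \eqref{a13}.

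Next I would establish universality in the choice of $\psi$. Diagonalising $S(0)-I$ on $\calN$ decouples $\calM_\eps$ into an orthogonal sum of blocks unitarily equivalent to $a_n\begin{pmatrix}0&X_\eps^*\\ X_\eps&0\end{pmatrix}$ on $H^2_+\oplus H^2_-$, where $X_\eps=P_-\bpsi_\eps\bchi_0 P_+$. If $\psi^{(1)},\psi^{(2)}$ both satisfy \eqref{a6}, then $\psi^{(1)}-\psi^{(2)}\in L^1\cap L^\infty$, so $(\psi^{(1)}_\eps-\psi^{(2)}_\eps)\bchi_0$ is Hilbert--Schmidt with norm $O(1)$ uniformly in $\eps$; hence the two choices give SHO's differing in $\Sch_2$ by $O(1)$, which alters $\Tr g(\calM_\eps)$ only by $O(1)$. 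This allows me to replace $\psi$ by the explicit $\psi_*(x)=-\tfrac{1}{\pi}\tan^{-1}x$, for which $(\psi_*)_\eps(x)=\tfrac{1}{2\pi i}\log\tfrac{x-i\eps}{x+i\eps}$ admits a clean Hardy decomposition.

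The main obstacle is the final step: computing the eigenvalue distribution of $X_\eps$ for the explicit symbol $\psi_*$. The Cayley transform $x\mapsto\tfrac{x-i}{x+i}$ unitarily identifies $L^2(\bbR)$ with $L^2(\bbT)$ and sends $(\psi_*)_\eps$ to a function on $\bbT$ with a single logarithmic singularity of magnitude $|\log\eps|$ concentrated at $-1$; the induced Hankel operator has the classical ``logarithmic symbol'' singular value asymptotics of $\cosh^{-1}$-type. A direct computation (either by explicit diagonalisation of the resulting Mehler-type kernel, or by invoking the Hankel operator results used in \cite{FP,PY2}) yields
\begin{equation*}
\#\{j:s_j(X_\eps)>t\}=\tfrac{|\log\eps|}{\pi^2}\cosh^{-1}(1/t)+o(|\log\eps|),\qquad 0<t<1,
\end{equation*}
equivalently the positive eigenvalues of each block $a_n\begin{pmatrix}0&X_\eps^*\\ X_\eps&0\end{pmatrix}$ accumulate on $(0,a_n)$ with density $\tfrac{|\log\eps|}{\pi^2|y|\sqrt{1-y^2/a_n^2}}$, and symmetrically on $(-a_n,0)$. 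Summing over $n$ gives $\mu_\lambda$ from \eqref{a9}, and the substitution $y=a_n/\cosh x$ turns \eqref{b8} into \eqref{b7}. Extending from $g=\1_\omega$ to general continuous $g$ vanishing near zero is a routine density argument, so the genuine analytic depth of the proof sits in this final Hankel computation, while the first-stage Schatten bookkeeping, though lengthy, is largely mechanical once \eqref{b5} is in hand.
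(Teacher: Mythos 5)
Your overall strategy matches the paper's: localize in the spectral representation, reduce through Schatten-controlled steps to a scalar symmetrised Hankel operator with an explicit model symbol, use universality in $\psi$, and finish with an explicit computation. However, there are two genuine gaps.

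First, the universality step as you state it is incorrect. You claim that if $\psi^{(1)}$ and $\psi^{(2)}$ both satisfy \eqref{a6}, then $\psi^{(1)}-\psi^{(2)}\in L^1\cap L^\infty$, so that the corresponding Hankel symbols differ by a uniformly Hilbert--Schmidt operator. But \eqref{a6} imposes no rate of convergence: $\psi^{(1)}-\psi^{(2)}$ tends to zero at infinity but may decay as slowly as $1/\log|x|$, hence need not be in $L^1$, and the associated Hankel operator need not be Hilbert--Schmidt. The uniform Hilbert--Schmidt argument (the paper's Lemma~\ref{lma.c6}) requires the decay bounds $\f(x)=O(1/x)$, $\f'(x)=O(1/x^2)$, which hold when $\psi$ is constant outside a compact interval, i.e., satisfies \eqref{a6aa}. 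This is exactly why the paper first proves everything under \eqref{a6aa} and then treats general $\psi$ satisfying only \eqref{a6} via a separate variational (min--max) argument in Step 4 of Section~\ref{sec.j}. Your proposal has no analogue of this step, so it does not actually cover the class of $\psi$ allowed by the theorem.

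Second, the assertion that ``any uniformly $\Sch_m$-bounded perturbation changes $\Tr g(\cdot)$ only by $O(1)$'' is unjustified and, as stated, false. When the operators involved have Schatten norms growing like $|\log\eps|^{1/m}$, the trace inequality
$$
\abs{\Tr X^m-\Tr Y^m}\le m\norm{X-Y}_m\max\{\norm{X}_m^{m-1},\norm{Y}_m^{m-1}\}
$$
gives only $O(|\log\eps|^{(m-1)/m})=o(|\log\eps|)$ for the monomial moments, not $O(1)$. Moreover for $g=\1_\omega$ one cannot compare traces directly in this way at all; the paper handles this by first establishing the limit for moments $g(t)=t^m$, $m\ge q$, bootstrapping through the chain $D_\eps^{(0)},\dots,D_\eps^{(5)}$ with these moment estimates, and only afterwards sandwiching $\1_\omega$ between polynomials via Weierstrass. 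Without this moment-bootstrapping framework, your reduction of $\Tr g(D_\eps)$ to $\Tr g$ of the model SHO has no quantitative justification. The rest of your outline --- the resolvent-identity expansion, freezing the kernel at $\lambda=0$ via H\"older continuity of $Z$ and $T(\lambda+i0)$, the block decomposition via $S(0)-I$, and the final Hankel computation via the explicit $\tan^{-1}$ symbol leading to the $\cosh^{-1}(1/t)$ counting function --- is consistent with the paper's proof (Lemmas~\ref{lma.c3}--\ref{lma.c8} and the computation via the Laplace/Carleman operator), so once these two gaps are repaired the argument goes through.
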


Of course, \eqref{b8} follows from \eqref{b7} by a change of variable
$y=a_n/\cosh x$.

\section{The outline of the proof}\label{sec.c}

Here we present the key steps of the proof of Theorem~\ref{thm.b1} and introduce all relevant 
objects. Details are filled in in Sections~\ref{sec.d}--\ref{sec.j}.

\subsection{The strategy}\label{sec.c1}

In what follows for the simplicity of notation we set the parameter $\lambda\in\delta_0$
in Theorem~\ref{thm.b1} to be equal to zero and denote $D_\eps:=D_\eps(0)$. 
We will initially assume that $\psi\in C^\infty(\bbR)$ satisfies a stronger assumption than \eqref{a6}, viz.
\begin{equation}
\psi(x)
=
\begin{cases}
1/2 & \text{ for } x<-R
\\
-1/2 & \text{ for } x>R
\end{cases}
\label{a6aa}
\end{equation}
with some $R>0$. Reduction of the general case to this one is done at the very end
of the proof in Section~\ref{sec.j} by using a simple variational argument.

Our strategy is to transform $D_\eps$ through the sequence of steps
\begin{equation}
D_\eps=:D_\eps^{(0)}
\to D_\eps^{(1)}
\to D_\eps^{(2)}
\to D_\eps^{(3)}
\to D_\eps^{(4)}
\to D_\eps^{(5)}
\label{c1}
\end{equation}
until we arrive at a ``sufficiently simple'' operator $D_\eps^{(5)}$. 
At each step (apart from $D_\eps^{(1)}\to D_\eps^{(2)}$, which is 
a unitary equivalence) we are able to control the error term as follows:
\begin{equation}
\norm{D_\eps^{(i)}-D_\eps^{(i-1)}}_q=O(1), \quad \eps\to+0,
\label{c2}
\end{equation}
where $q$ is a sufficiently large exponent. The precise restrictions on $q$ will
vary with $i$, but the choice $q\geq2p$, $q>1/\gamma$ will work for all $i$; 
here $\gamma$ is the H\"older exponent in the $\Sch_{2p}$ valued strong smoothness 
assumption \eqref{b5}. 

After the estimates \eqref{c2} have been established, the proof proceeds
as follows. The main task is to prove the asymptotics \eqref{b7} for $g(t)=t^m$
for all integers $m\geq q$. (The general case is easily obtained from here by 
a standard application of the Weierstrass approximation theorem.)
Denote by $\Delta_m$ be the r.h.s. 
of \eqref{b7} for $g(t)=t^m$: 
$$
\Delta_m
=
\frac{1+(-1)^m}{2\pi^2}
\sum_{n=1}^N a_n(0)^m
\int_{-\infty}^\infty (\cosh x)^{-m} dx. 
$$
Let $P^{(i)}$ be the statement 
\begin{equation}
P^{(i)}: 
\qquad
\abs{\log\eps}^{-1}\Tr(D_\eps^{(i)})^m\to\Delta_m, 
\quad
\eps\to+0, 
\quad
\forall m\geq q.
\label{c4}
\end{equation}
Our aim is to prove $P^{(0)}$. But we start from the other end of the chain \eqref{c1}:
the operator $D^{(5)}_\eps$ is sufficiently simple so we are able to establish $P^{(5)}$,
see Section~\ref{sec.c8} below. 
After that, using an operator theoretic argument (see Section~\ref{sec.j}), we prove that 
$P^{(i)}$ together with the estimate \eqref{c2} gives $P^{(i-1)}$. 
Thus, moving backwards along the chain \eqref{c1}, we arrive at the 
required statement $P^{(0)}$. 

One exception is the step $D_\eps^{(1)}\to D_\eps^{(2)}$; 
here the operators $D_\eps^{(1)}$ and $D_\eps^{(2)}$
are unitarily equivalent.  Thus,  we have $\Tr(D_\eps^{(1)})^m=\Tr(D_\eps^{(2)})^m$
and so the statements $P^{(1)}$ and $P^{(2)}$ are equivalent. 

The operators $D_\eps^{(i)}$ with $i=3,4,5$ are constructed as operator valued
\emph{symmetrised Hankel operators}. 
In the next subsection we introduce and briefly discuss this concept. In the rest
of this section, we describe each of the operators $D_\eps^{(i)}$ in the chain \eqref{c1}.

\subsection{Symmetrised Hankel operators}\label{sec.c2}

Let $\fh$ be a Hilbert space (the case $\dim\fh<\infty$ is not excluded). 
We will denote by $P_\pm$ the orthogonal
projection in $L^2(\bbR,\fh)$ onto the vector-valued Hardy class $H^2_\pm(\bbR,\fh)$. 
Of course, this projection is given by the same formula \eqref{a11}
as in the scalar-valued case.
For $\Omega\in L^\infty(\bbR,\fh)$ we call 
$$
P_-\bOmega P_+: H^2_+(\bbR,\fh)\to H^2_-(\bbR,\fh)
$$
the \emph{Hankel operator} (HO) with the symbol $\Omega$, 
and we call 
\begin{equation}
\SHO(\Omega)
=
P_-\bOmega P_+
+
P_+\bOmega^* P_-
: L^2(\bbR,\fh)\to L^2(\bbR,\fh)
\label{c6}
\end{equation}
the \emph{symmetrized Hankel operator} (SHO) with the symbol $\Omega$. 
Of course, the notion of  HO is standard, while the notion of SHO is not; 
to the best of the author's knowledge,
SHOs were introduced  in \cite{PY1} as models for the operators of the type $D(\lambda)$.

By definition, SHOs are self-adjoint. 
It is well known (see e.g. \cite[Section 2.4]{Peller}) that 
if the symbol $\Omega\in C(\bbR,\Sch_\infty)$ and $\norm{\Omega(\lambda)}\to0$ as
$\abs{\lambda}\to\infty$, then the corresponding HO (and therefore also the SHO) is compact. 
In this paper, we will only deal with symbols of this class. 

In order to comment on the nature of the spectrum of SHOs, 
we recall without proof a simple operator theoretic statement.

\begin{lemma}\label{lma.c1}
Let $\calH_1$, $\calH_2$ be Hilbert spaces and let $X:\calH_1\to\calH_2$ 
be a compact operator. Then the non-zero eigenvalues of the ``block-matrix''
$$
\begin{pmatrix}
0 & X^*
\\
X & 0
\end{pmatrix}
\quad
\text{ in } 
\calH_1\oplus \calH_2
$$
are given by $\{\pm s_n(X)\}$, where $\{s_n(X)\}$ are the 
non-zero singular values of $X$.
\end{lemma}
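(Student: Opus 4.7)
The plan is to reduce everything to the Schmidt (singular value) decomposition of the compact operator $X$. Writing
$$
X = \sum_n s_n \langle \cdot, e_n\rangle_{\calH_1} f_n,
$$
where $\{e_n\}\subset \calH_1$ and $\{f_n\}\subset \calH_2$ are orthonormal systems and $\{s_n\}$ are the non-zero singular values of $X$, we have the basic relations $X e_n = s_n f_n$ and $X^* f_n = s_n e_n$. This should immediately produce eigenvectors for the block matrix in an explicit form.

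Concretely, for each index $n$ I would look at the two-dimensional subspace of $\calH_1 \oplus \calH_2$ spanned by $(e_n, 0)$ and $(0, f_n)$. This subspace is invariant under the block matrix, and the restriction has matrix $\bigl(\begin{smallmatrix} 0 & s_n \\ s_n & 0 \end{smallmatrix}\bigr)$. Diagonalising gives eigenvalues $\pm s_n$ with eigenvectors $\tfrac{1}{\sqrt{2}}(e_n,\pm f_n)$, producing the full list $\{\pm s_n\}$ claimed in the lemma.

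To see that there are no other non-zero eigenvalues, the cleanest route is to square the operator:
$$
\begin{pmatrix} 0 & X^* \\ X & 0 \end{pmatrix}^{\!2}
=
\begin{pmatrix} X^*X & 0 \\ 0 & XX^* \end{pmatrix}.
$$
The non-zero eigenvalues of $X^*X$ and of $XX^*$ coincide (with the same multiplicities) and form the set $\{s_n^2\}$. Hence the non-zero spectrum of the squared block matrix is $\{s_n^2\}$ with multiplicities doubled. Since the block matrix is self-adjoint, any non-zero eigenvalue $\mu$ satisfies $\mu^2 \in \{s_n^2\}$, so $\mu \in \{\pm s_n\}$, and both signs occur by the previous step with the correct multiplicity.

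I do not expect a genuine obstacle here; the only point requiring a little care is bookkeeping of multiplicities, especially if $\calH_i$ have non-trivial kernels for $X$ or $X^*$. These kernels only contribute to the zero eigenspace of the block matrix (since vectors of the form $(u,0)$ with $Xu=0$ or $(0,v)$ with $X^*v=0$ lie in the kernel), and so they do not disturb the non-zero spectrum. As an alternative way to encode the $\pm$ symmetry, one may note that the unitary $U = \bigl(\begin{smallmatrix} I & 0 \\ 0 & -I \end{smallmatrix}\bigr)$ anticommutes with the block matrix, which immediately forces the non-zero spectrum to be symmetric about the origin.
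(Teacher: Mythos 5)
Your proof is correct. Note that the paper states Lemma~\ref{lma.c1} explicitly ``without proof,'' so there is nothing in the text to compare against; your argument via the Schmidt decomposition (which produces invariant two-dimensional subspaces with restricted matrix $\bigl(\begin{smallmatrix} 0 & s_n \\ s_n & 0 \end{smallmatrix}\bigr)$), combined with squaring the block operator to rule out any further non-zero spectrum and a remark on the kernel contributions, is precisely the standard way to establish this, and the multiplicity bookkeeping is handled correctly.
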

The operator $\SHO(\Omega)$ can be written as
$$
\SHO(\Omega)
=
\begin{pmatrix}
0 & (P_-\bOmega P_+)^*
\\
(P_ - \bOmega P_+) & 0
\end{pmatrix}
\quad
\text{ in } 
L^2(\bbR,\fh)=H^2_+(\bbR,\fh)\oplus H^2_-(\bbR,\fh).
$$
Thus, Lemma~\ref{lma.c1} reduces the analysis of the spectrum of 
$\SHO(\Omega)$ to computing the singular values of the 
HO $P_- \bOmega P_+$. We will use this idea below.

Lemma~\ref{lma.c1} also shows that the spectrum of $\SHO(\Omega)$ 
is symmetric with respect to the reflection around zero. 
In particular, if $\Tr g(\SHO(\Omega))$ exists for some odd function $g$, then it equals zero. 
This gives some insight into the symmetry of the density function $\mu_\lambda$.

\subsection{Spectral localization: $D_\eps^{(0)}\to D_\eps^{(1)}$}\label{sec.c3}
Let $\chi_0\in C_0^\infty(\bbR)$ be a real valued function such that 
$\supp\chi_0\subset \delta_0$ and such that $\chi_0(\lambda)=1$ for $\lambda$ 
in a neighbourhood of $\lambda=0$. We set
$$
D_\eps^{(1)}
=
\chi_0(H_0)
(\chi_0(H)^2\psi_\eps(H)-\chi_0(H_0)^2\psi_\eps(H_0))\chi_0(H_0).
$$
Thus,  we regularize $D_\eps^{(0)}$ in 
two ways: (i) we introduce the ``spectral cutoff'' by replacing 
$D_\eps^{(0)}$ with $\chi_0(H_0)D_\eps^{(0)}\chi_0(H_0)$ 
and (ii) we replace $\psi_\eps$ in the definition of $D_\eps^{(0)}$ by
$\psi_\eps\chi_0^2$. (We write $\chi_0^2$ rather than $\chi_0$ here for a trivial 
reason: it will be convenient later to split this term into a product of two cutoff functions.)
Item (i) above is purely technical; item (ii) highlights the fact that the only important aspect
of $\psi_\eps$ is that this function ``approaches a jump'' at zero, and the behaviour
of $\psi_\eps$ outside a neighbourhood of zero is irrelevant. 
We will prove
\begin{lemma}\label{lma.c2}
Let Assumption~\ref{ass1}(1), (2) hold true, and assume \eqref{a6aa}.
Then 
$$
\norm{D_\eps^{(0)}-D_\eps^{(1)}}_p=O(1), \quad \eps\to+0.
$$
\end{lemma}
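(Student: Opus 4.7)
The plan is to exploit the fact that $\chi_0\equiv 1$ in a neighbourhood of $0$, while the $\eps$-dependence of $\psi_\eps$ is confined to $[-R\eps,R\eps]$ by \eqref{a6aa}. Fix $\eps$ so small that $R\eps$ lies inside the set where $\chi_0=1$; then the function $(1-\chi_0(x)^2)\psi_\eps(x)$ coincides with
$$
\phi_0(x) := -\tfrac12\,\sign(x)\bigl(1-\chi_0(x)^2\bigr),
$$
which is $\eps$-independent, smooth on all of $\bbR$ (the jump of $\sign$ at $0$ is killed by $1-\chi_0^2$ vanishing there), and tends to $\mp 1/2$ at $\pm\infty$. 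Setting $M_\eps := \chi_0(H)^2\psi_\eps(H) - \chi_0(H_0)^2\psi_\eps(H_0)$, functional calculus yields
$$
D_\eps^{(0)} = M_\eps + \bigl(\phi_0(H)-\phi_0(H_0)\bigr), \qquad D_\eps^{(1)} = \chi_0(H_0)\,M_\eps\,\chi_0(H_0),
$$
and hence
$$
D_\eps^{(0)} - D_\eps^{(1)} = (I-\chi_0(H_0))\,M_\eps + \chi_0(H_0)\,M_\eps\,(I-\chi_0(H_0)) + \bigl(\phi_0(H)-\phi_0(H_0)\bigr).
$$

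The last summand is $\eps$-independent and lies in $\Sch_p$ by the standard functional-calculus lemma that upgrades assumption \eqref{b2a} to $f(H)-f(H_0)\in\Sch_p$ for smooth $f$ with sufficiently regular derivatives -- a condition $\phi_0$ trivially meets, since $\phi_0'$ is smooth and compactly supported. It remains to bound $\|(I-\chi_0(H_0))M_\eps\|_p$ uniformly in $\eps$; the symmetric estimate on $M_\eps(I-\chi_0(H_0))$ then follows by taking adjoints.

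For that bound, I would use that the smooth function $(1-\chi_0(x))\chi_0(x)^2$ vanishes in a neighbourhood of $0$, so, for the same range of $\eps$,
$$
(1-\chi_0(x))\chi_0(x)^2\psi_\eps(x) = \alpha(x) := -\tfrac12\,\sign(x)(1-\chi_0(x))\chi_0(x)^2 \in C_0^\infty(\bbR),
$$
an $\eps$-independent compactly supported $C^\infty$ function. Writing $\eta := 1-\chi_0$, functional calculus gives $\eta(H_0)\chi_0(H_0)^2\psi_\eps(H_0) = \alpha(H_0)$ and $\eta(H)\chi_0(H)^2\psi_\eps(H) = \alpha(H)$, so inserting $\alpha(H)-\alpha(H)$ produces
$$
\eta(H_0)\,M_\eps = \bigl(\alpha(H)-\alpha(H_0)\bigr) + \bigl(\chi_0(H)-\chi_0(H_0)\bigr)\,\chi_0(H)^2\psi_\eps(H).
$$
The first summand is $\eps$-independent and lies in $\Sch_p$ by the same functional-calculus lemma applied to $\alpha\in C_0^\infty$; in the second summand $\chi_0(H)-\chi_0(H_0)\in\Sch_p$ with $\eps$-independent norm (again by the same lemma, since $\chi_0\in C_0^\infty$), while the remaining factor is bounded in operator norm by $\|\chi_0^2\psi_\eps\|_\infty\leq\|\chi_0\|_\infty^2\|\psi\|_\infty$, uniformly in $\eps$. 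This yields $\|\eta(H_0)M_\eps\|_p = O(1)$, which completes the argument.

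The main point that needs careful verification -- and the likely principal obstacle in a full write-up -- is the smoothness of the functions $\phi_0$ and $\alpha$ built from the discontinuous $\sign(\cdot)$, thanks to the cancellation against an accompanying cutoff factor that vanishes near $0$, together with the precise statement of the Schatten-class functional-calculus lemma that turns \eqref{b2a} into the required control: it must apply both to compactly supported $C^\infty$ functions (such as $\chi_0$ and $\alpha$) and to smooth functions with distinct finite limits at $\pm\infty$ (such as $\phi_0$).
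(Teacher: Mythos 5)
Your proof is correct and rests on the same two ingredients as the paper's: the key observation that away from a neighbourhood of zero the function $\psi_\eps$ equals the $\eps$-independent $\psi_0(x)=-\tfrac12\sign(x)$ once $\eps$ is small (so any cutoff vanishing near the origin times $\psi_\eps$ stabilizes), and the Schatten-class functional calculus consequence of \eqref{b2a} (Lemma~\ref{lma.d1}(ii) in the paper), which applies both to $C_0^\infty$ functions like $\chi_0$ and $\alpha$ and to smooth functions constant near $\pm\infty$ like $\phi_0$. Your algebraic arrangement differs from the paper's: you first extract the single $\eps$-independent correction $\phi_0(H)-\phi_0(H_0)$ and then sandwich the residual $M_\eps$ by $(I-\chi_0(H_0))$ on each side, whereas the paper telescopes through $D_\eps\to\chi_0(H_0)D_\eps\to\chi_0(H_0)D_\eps\chi_0(H_0)\to D_\eps^{(1)}$, applying the same stabilization at each stage. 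The two write-ups are equivalent in substance; your version isolates the $\eps$-independent part slightly more cleanly, while the paper's telescoping keeps each step syntactically parallel.
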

Note that we do not need the strong smoothness assumption here. 
The proof of the Lemma will be given in Section~\ref{sec.d}; it involves
only some simple $C^\infty$ functional calculus for $H_0$ and $H$.

\subsection{Application of the resolvent identity: $D_\eps^{(1)}\to D_\eps^{(2)}$}\label{sec.c4}
First we need some notation. 
Let 
\begin{equation}
Y(z)=V_0(I+T(z)V_0)^{-1}, \quad \Im z>0
\label{c10}
\end{equation}
(recall that $T(z)$ is defined in \eqref{b6}). 
Let $\chi_0$ be as in the previous subsection.
Using the notation $Z(\lambda)$ (see \eqref{b3a}),
we set
$$
Z_0(\lambda)
=
\begin{cases}
Z(\lambda)\chi_0(\lambda), & \lambda\in \delta_0,
\\
0, & \lambda\in \bbR\setminus\delta_0,
\end{cases}
\quad
Y_0(\lambda)
=
\begin{cases}
Y(\lambda+i0)\chi_0(\lambda), & \lambda\in \delta_0,
\\
0, & \lambda\in \bbR\setminus\delta_0.
\end{cases}
$$
Thus, $Z_0$ and $Y_0$ are operator valued functions on $\bbR$ and by Assumption~\ref{ass1}(4)
and by Proposition~\ref{pr.b1} they are H\"older continuous: 
\begin{equation}
Z_0\in C^\gamma(\bbR,\Sch_{2p}), 
\quad
Y_0\in C^\gamma(\bbR,\mathbf B).
\label{c12}
\end{equation}
We will use the corresponding ``multiplication operators''
$$
\bZ_0: L^2(\bbR,\calK)\to L^2(\bbR,\calN)
\quad \text{ and }\quad
\bY_0: L^2(\bbR,\calK)\to L^2(\bbR,\calK),
$$
defined as in  \eqref{a12}.
Consider the operator 
$$
D_\eps^{(2)}:=\calF D_\eps^{(1)}\calF^* 
\quad\text{ in }\quad
L^2(\delta_0,\calN)\subset L^2(\bbR,\calN).
$$
It will be convenient to consider this operator as an operator acting
on $L^2(\bbR,\calN)$, extending it by zero to $L^2(\bbR\setminus\delta_0,\calN)$. 
Using the resolvent identity for $H_0$ and $H$, we will prove
\begin{lemma}\label{lma.c3}
Let Assumption~\ref{ass1} hold true.
Then 
\begin{equation}
D_\eps^{(2)}
=
4\pi\Im (\bZ_0 P_- \bY_0 \bpsi_\eps \bchi_0 P_+ \bZ_0^*)
\quad
\text{ in } L^2(\bbR,\calN).
\label{c14}
\end{equation}
\end{lemma}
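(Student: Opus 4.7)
The plan is to compute the integral kernel of $D_\eps^{(2)}$ by passing to the spectral representation of $H_0$, independently compute the kernel of the symmetrised Hankel-type operator on the right-hand side of \eqref{c14}, and match them.

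First, set $F_\eps := \chi_0^2 \psi_\eps$, so that $D_\eps^{(1)} = \chi_0(H_0)[F_\eps(H) - F_\eps(H_0)]\chi_0(H_0)$. Since $F_\eps$ is smooth and compactly supported in $\delta_0$, Stone's formula represents
$$
F_\eps(H) - F_\eps(H_0) = \frac{1}{\pi}\int_{\bbR} F_\eps(\lambda)\,\Im\bigl(R(\lambda+i0)-R_0(\lambda+i0)\bigr)\, d\lambda,
$$
with the boundary values existing in operator norm for $\lambda\in\delta$ by Proposition~\ref{pr.b1}. A short manipulation of the resolvent identity using $V=G^*V_0G$ and the intertwining $V_0(I+TV_0)^{-1}=(I+V_0T)^{-1}V_0$ then yields
$$
R(z) - R_0(z) = -R_0(z)\, G^*\, Y(z)\, G\, R_0(z),
$$
with $Y(z)$ as in \eqref{c10}.

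Next I pass to $\calF$. By \eqref{b3a}, the composition $\calF\chi_0(H_0)R_0(\lambda+i0)G^*$ sends $u\in\calK$ to $\mu\mapsto(\mu-\lambda-i0)^{-1}Z_0(\mu)u$, and by duality $GR_0(\lambda+i0)\chi_0(H_0)\calF^*$ sends $\tilde f$ to $\int(\mu'-\lambda-i0)^{-1}Z_0(\mu')^*\tilde f(\mu')\,d\mu'$. Splitting $F_\eps(\lambda)=\chi_0(\lambda)\cdot\chi_0(\lambda)\psi_\eps(\lambda)$, absorbing one factor of $\chi_0(\lambda)$ into $Y_0(\lambda)=\chi_0(\lambda)Y(\lambda+i0)$, and expanding $\Im=(\,\cdot\,-\,\cdot^*)/(2i)$, the operator $D_\eps^{(2)}=\calF D_\eps^{(1)}\calF^*$ (extended by zero outside $\delta_0$) acquires the integral kernel on $L^2(\bbR,\calN)$
\begin{multline*}
K(\mu,\mu') = -\frac{1}{2\pi i}\,Z_0(\mu)\int_{\bbR}\chi_0(\lambda)\psi_\eps(\lambda)
\\ \times
\biggl[\frac{Y_0(\lambda)}{(\mu-\lambda-i0)(\mu'-\lambda-i0)} - \frac{Y_0(\lambda)^*}{(\mu-\lambda+i0)(\mu'-\lambda+i0)}\biggr] d\lambda\; Z_0(\mu')^*.
\end{multline*}

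The final step is a direct calculation from the Cauchy formulas \eqref{a11} for $P_\pm$: the iterated Cauchy integrals defining $\bZ_0 P_- \bY_0 \bpsi_\eps \bchi_0 P_+ \bZ_0^*$ produce the kernel
$$
\frac{1}{4\pi^2}\, Z_0(y)\int_{\bbR}\frac{Y_0(x)\psi_\eps(x)\chi_0(x)}{(y-x-i0)(x-\lambda+i0)}\,dx\; Z_0(\lambda)^*,
$$
whose $4\pi\,\Im$ reproduces $K(\mu,\mu')$ after using the elementary relations $(y-x-i0)=-(x-y+i0)$ and $(x-\lambda+i0)=-(\lambda-x-i0)$ to bring the denominators into the Stone-formula form. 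The main obstacle is the sign and $\pm i0$ bookkeeping between the two representations: the inner $P_+$ generates the pole in the variable $\mu'$ while the outer $P_-$ generates the pole in the variable $\mu$, and the two contributions appearing in $\Im$ correspond exactly to the two boundary values of the products of resolvents that arise in the Stone representation. A secondary technical point is justifying that the Stone-formula integral converges in a topology strong enough to commute with the resolvent identity and with $\calF$; this follows from the H\"older regularity \eqref{c12} supplied by Assumption~\ref{ass1}(4) and Proposition~\ref{pr.b1}.
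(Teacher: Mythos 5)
Your proof is correct and follows essentially the same route as the paper: Stone's formula, the iterated resolvent identity in the form $R(z)-R_0(z)=-(GR_0(\bar z))^*Y(z)GR_0(z)$, the strong smoothness assumption to express $GR_0(z)\chi_0(H_0)$ via $Z_0$, and the Cauchy formulas \eqref{a11} to recognise the Hardy projections; you present the computation via integral kernels where the paper uses quadratic forms, but this is only a difference in bookkeeping. One small imprecision: Proposition~\ref{pr.b1} gives boundary values of the sandwiched operator $T(\lambda+i0)=GR_0(\lambda+i0)G^*$, not of $R(\lambda+i0)-R_0(\lambda+i0)$ itself in operator norm, so the justification of convergence in Stone's formula should really invoke the H\"older continuity of $Z_0$, $Y_0$ after the resolvent identity has been applied and the $\chi_0(H_0)$ cutoffs inserted --- which is exactly what your subsequent kernel manipulations do.
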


In \eqref{c14}, the projections $P_+$ and $P_-$ and the multiplication operators
$\bpsi_\eps$ and $\bchi_0$ act in $L^2(\bbR,\calK)$. 

\subsection{Swapping $\bZ_0$ and $P_\pm$: $D_\eps^{(2)}\to D_\eps^{(3)}$}\label{sec.c5}

Our next step is to swap $\bZ_0$ with $P_-$ and $P_+$ with $\bZ_0^*$ in 
the representation \eqref{c14}. Set
\begin{equation}
D_\eps^{(3)}
=
4\pi\Im (P_- \bpsi_\eps \bchi_0 \bZ_0 \bY_0 \bZ_0^*  P_+ )
\quad
\text{ in } L^2(\bbR,\calN).
\label{c15}
\end{equation}
In \eqref{c15}, $P_\pm$ act in $L^2(\bbR,\calN)$. 
We will prove
\begin{lemma}\label{lma.c4}
Let Assumption~\ref{ass1} hold true
and let the exponent $q$ satisfy $q\geq 2p$, $q>1/\gamma$.  
Then 
$$
\norm{D_\eps^{(2)}-D_\eps^{(3)}}_q=O(1), \quad \eps\to+0.
$$
\end{lemma}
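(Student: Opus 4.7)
The plan is to rewrite the difference $D_\eps^{(2)}-D_\eps^{(3)}$ as a sum of commutator terms and then estimate each term in $\Sch_q$. Set $A_\eps:=\bZ_0 P_-\bY_0\bpsi_\eps\bchi_0 P_+\bZ_0^*$ and $B_\eps:=P_-\bpsi_\eps\bchi_0\bZ_0\bY_0\bZ_0^* P_+$, so that $D_\eps^{(2)}=4\pi\Im A_\eps$ and $D_\eps^{(3)}=4\pi\Im B_\eps$. Applying the identities $\bZ_0 P_-=P_-\bZ_0+[\bZ_0,P_-]$ and $P_+\bZ_0^*=\bZ_0^* P_++[P_+,\bZ_0^*]$ in succession, and using that the scalar multiplication operators $\bpsi_\eps$ and $\bchi_0$ commute with the operator-valued multiplications $\bZ_0$, $\bY_0$, $\bZ_0^*$, the ``diagonal'' term collapses to $P_-\bpsi_\eps\bchi_0\bZ_0\bY_0\bZ_0^* P_+=B_\eps$. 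This yields the decomposition
$$
A_\eps-B_\eps=[\bZ_0,P_-]\,\bY_0\bpsi_\eps\bchi_0 P_+\bZ_0^*+P_-\bZ_0\bY_0\bpsi_\eps\bchi_0\,[P_+,\bZ_0^*].
$$

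Each commutator is a sum of Hankel-type operators: $[\bZ_0,P_-]=P_+\bZ_0 P_--P_-\bZ_0 P_+$ and $[P_+,\bZ_0^*]=P_+\bZ_0^* P_--P_-\bZ_0^* P_+$. By Assumption~\ref{ass1}(4) and the definition of $Z_0$, the symbol $Z_0$ lies in $C^\gamma(\bbR,\Sch_{2p})$ and has compact support thanks to the cut-off $\chi_0$. A Peller-type Schatten class estimate for operator-valued Hankel operators with H\"older continuous, $\Sch_{2p}$-valued, compactly supported symbols then gives
$$
\norm{[\bZ_0,P_-]}_q+\norm{[P_+,\bZ_0^*]}_q\leq C,
$$
uniformly in $\eps$, for every $q\geq 2p$ with $q>1/\gamma$. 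The threshold $q>1/\gamma$ is the standard Peller bound for H\"older symbols, while $q\geq 2p$ accounts for the $\Sch_{2p}$-valuedness of $Z_0$.

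The remaining factors $\bY_0$, $\bpsi_\eps$, $\bchi_0$, $P_\pm$, $\bZ_0$, $\bZ_0^*$ are bounded in operator norm uniformly in $\eps>0$: the $\eps$-dependence enters only through $\bpsi_\eps$, and $\norm{\bpsi_\eps}\leq\sup_{x\in\bbR}|\psi(x)|<\infty$, while the boundedness of $\bY_0$, $\bZ_0$, $\bZ_0^*$ is part of \eqref{c12}. Applying the H\"older inequality for Schatten classes \eqref{a14} to the above decomposition (one factor in $\Sch_q$, the remainder in operator norm) gives $\norm{A_\eps-B_\eps}_q=O(1)$, and taking imaginary parts delivers the claim. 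The main obstacle is the vector-valued Peller-type Hankel estimate: it combines the scalar Peller theorem (giving the exponent $1/\gamma$) with the $\Sch_{2p}$-valuedness of the symbol (giving $2p$), and has to be invoked in a form compatible with operator-valued symbols, as used in the companion paper \cite{PY2}.
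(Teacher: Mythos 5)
Your proposal is correct and follows essentially the same route as the paper: the same commutator decomposition of $D_\eps^{(2)}-D_\eps^{(3)}$ into two terms involving $[\bZ_0,P_-]$ and $[P_+,\bZ_0^*]$, the same rewriting of each commutator as a pair of Hankel operators, and the same invocation of a Peller-type $\Sch_q$ estimate for operator-valued Hankel operators with H\"older $\Sch_q$-valued compactly supported symbols (the paper's Lemma~\ref{lma.f1}, with $q\geq 2p$ used simply so that $\Sch_{2p}\subset\Sch_q$), followed by H\"older's inequality against uniformly bounded factors.
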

The proof will be achieved by a straightforward application of the results 
of \cite[Section 6.9]{Peller}, where Schatten norm estimates for commutators of $P_\pm$ 
with operator-valued functions are given. 

Comparing \eqref{c15} with the definition \eqref{c6} of SHO,
we find that the operator $D_\eps^{(3)}$ is in fact a SHO in $L^2(\bbR,\calN)$:
$$
D_\eps^{(3)}=\SHO(\Omega_\eps^{(3)}), \quad
\Omega_\eps^{(3)}=-2\pi i \psi_\eps \chi_0 Z_0 Y_0 Z_0^*.
$$
This already shows (see the discussion after Lemma~\ref{lma.c1}) that $\Tr(D_\eps^{(3)})^m=0$ 
for all odd $m$, whenever the trace exists. 

It is important that we can rewrite the symbol $\Omega_\eps^{(3)}$ 
in terms of the scattering matrix $S(\lambda)$ 
for the pair $H_0$, $H$. 
Recall the stationary representation for $S(\lambda)$ (see e.g. \cite[Section 5.5]{Yafaev1}): 
\begin{equation}
S(\lambda)=I-2\pi i Z(\lambda)Y(\lambda+i0)Z(\lambda)^*, \quad \lambda\in \delta_0.
\label{c18}
\end{equation}
We denote 
$$
S_0(\lambda)=I-2\pi i Z_0(\lambda)Y_0(\lambda)Z_0(\lambda)^*, \quad \lambda\in \bbR.
$$
Thus, $S_0(\lambda)=S(\lambda)$ in a neighbourhood of $\lambda=0$ and by \eqref{c12},
$$
S_0-I\in C^\gamma(\bbR,\Sch_p).
$$
With this notation, we can rewrite the symbol $\Omega_\eps^{(3)}$ as 
$$
\Omega_\eps^{(3)}(\lambda)
=
\psi_\eps(\lambda)\chi_0(\lambda)(S_0(\lambda)-I), \quad \lambda\in\bbR. 
$$

\subsection{Replacing $S_0(\lambda)$ by $S(0)$: $D_\eps^{(3)}\to D_\eps^{(4)}$}\label{sec.c6}

At this step, we replace the symbol $\Omega^{(3)}_\eps$ with 
$$
\Omega_\eps^{(4)}(\lambda)
=
\psi_\eps(\lambda)\chi_0(\lambda)(S(0)-I), \quad \lambda\in\bbR. 
$$
Set $D_\eps^{(4)}=\SHO(\Omega^{(4)}_\eps)$; we prove
\begin{lemma}\label{lma.c5}
Let Assumption~\ref{ass1} hold true, and assume \eqref{a6aa}.
Let the exponent $q$ satisfy $q\geq 2p$, $q>1/\gamma$.  
Then 
$$
\norm{D_\eps^{(3)}-D_\eps^{(4)}}_q=O(1), \quad \eps\to+0.
$$
\end{lemma}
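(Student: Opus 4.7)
Since $\Omega_\eps^{(3)}-\Omega_\eps^{(4)}=\psi_\eps(\lambda)\chi_0(\lambda)(S_0(\lambda)-S(0))$, one has $D_\eps^{(3)}-D_\eps^{(4)}=\SHO(\Omega_\eps)$ with $\Omega_\eps(\lambda):=\psi_\eps(\lambda)\chi_0(\lambda)(S_0(\lambda)-S(0))$. The plan is to apply the same Schatten-class commutator estimates \cite[Section 6.9]{Peller} used in Lemma~\ref{lma.c4}. Using the identity $P_-\bOmega_\eps P_+=P_-[\bOmega_\eps,P_+]$ (and the corresponding one obtained by taking adjoints), one obtains
\[
\norm{\SHO(\Omega_\eps)}_q\le 2\norm{[\bOmega_\eps,P_+]}_q,
\]
and Peller's theorem bounds the right-hand side by $C\,\norm{\Omega_\eps}_{B^{1/q}_{q,q}(\bbR;\Sch_q)}$. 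So the task reduces to proving the uniform bound $\norm{\Omega_\eps}_{B^{1/q}_{q,q}(\bbR;\Sch_q)}=O(1)$ as $\eps\to+0$.

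The decisive structural input is that $S_0(0)=S(0)$, so by the $\Sch_{2p}$-valued strong smoothness \eqref{b5} together with \eqref{c12} one has the global $\gamma$-H\"older bound $\norm{S_0(\lambda)-S(0)}_p\le C|\lambda|^\gamma$ for $\lambda\in\bbR$. In other words, the ``bad'' factor $\psi_\eps$, which develops a jump at $\lambda=0$ as $\eps\to+0$, is multiplied by a factor that vanishes at $\lambda=0$ at the H\"older rate $\gamma$. I would verify the Besov bound using the standard integral seminorm
\[
\norm{\Omega_\eps}_{B^{1/q}_{q,q}}^q \sim \iint \frac{\norm{\Omega_\eps(x)-\Omega_\eps(y)}_q^q}{|x-y|^{2}}\,dx\,dy,
\]
and the splitting $\Omega_\eps(x)-\Omega_\eps(y)=\psi_\eps(x)(\Phi(x)-\Phi(y))+(\psi_\eps(x)-\psi_\eps(y))\Phi(y)$, where $\Phi(\lambda):=\chi_0(\lambda)(S_0(\lambda)-S(0))$. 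The first summand is uniformly $\gamma$-H\"older in $\Sch_q$ (using $q\ge 2p\ge p$ so that $\Sch_p\hookrightarrow\Sch_q$) and compactly supported, so its Besov integral is $O(1)$ whenever $\gamma q>1$, which is ensured by the hypothesis $q>1/\gamma$. The second summand is handled using $|\psi_\eps(x)-\psi_\eps(y)|\le C\min(1,|x-y|/\eps)$ together with $\norm{\Phi(y)}_q\le C|y|^\gamma$.

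For the second summand I would split the $(x,y)$-plane according to whether either variable lies in the transition interval $[-R\eps,R\eps]$. When both $x,y$ lie outside $[-R\eps,R\eps]$ on the same side of $0$, $\psi_\eps$ is locally constant and the contribution vanishes; on opposite sides, one has $|y|\le|x-y|$ and the integrand is dominated by $|x-y|^{\gamma q-2}$, integrable on a bounded region when $\gamma q>1$. When exactly one variable lies inside $[-R\eps,R\eps]$, a direct dyadic estimate that exploits continuity of $\psi_\eps$ and $\norm{\Phi(y)}_q\le C|y|^\gamma$ gives an $O(1)$ contribution, after integrating separately over $|y|$ of size $O(\eps)$ and $|y|$ of size $\gtrsim\eps$.

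The main obstacle is the subregion $\{x,y\in[-R\eps,R\eps]\}$. Here $|\psi_\eps(x)-\psi_\eps(y)|$ is of size $\min(1,|x-y|/\eps)$, reflecting the formation of an effective jump on a set of length $O(\eps)$, while $\norm{\Phi(y)}_q\le C\eps^\gamma$. The integrand is thus controlled by $\eps^{\gamma q}\,\min(1,|x-y|/\eps)^q/|x-y|^{2}$, and direct integration over $[-R\eps,R\eps]^2$ (splitting at $|x-y|\sim\eps$) gives a total of order $\eps^{\gamma q}=O(1)$. The H\"older vanishing of $S_0(\lambda)-S(0)$ at $\lambda=0$ exactly compensates the rapid variation of $\psi_\eps$, and the condition $q>1/\gamma$ is the precise arithmetic requirement that makes all the resulting one-dimensional integrals in $|x-y|$ converge.
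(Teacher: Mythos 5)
Your proposal is correct and takes essentially the same approach as the paper: the same Peller reduction, the same decomposition of $\Omega_\eps(x)-\Omega_\eps(y)$ into $\psi_\eps(x)\bigl(\Phi(x)-\Phi(y)\bigr)+\bigl(\psi_\eps(x)-\psi_\eps(y)\bigr)\Phi(y)$, and the same key observation that the $\gamma$-H\"older vanishing of $S_0(\lambda)-S(0)$ at $\lambda=0$ compensates the rapid variation of $\psi_\eps$ there. The only difference is presentational: the paper first proves a uniform $C^\gamma(\bbR,\Sch_p)$ bound on the full symbol (using the H\"older modulus $|\psi(\lambda_2/\eps)-\psi(\lambda_1/\eps)|\leq C|\lambda_2-\lambda_1|^\gamma\eps^{-\gamma}$ rather than your Lipschitz bound) and then invokes Lemma~\ref{lma.f1} as a black box, instead of re-estimating the Besov integral directly via a case analysis as you do.
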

The proof is based on  the H\"older continuity of $S_0(\lambda)$ and 
again uses the estimates from \cite[Section 6.9]{Peller}. 

It is important that $\Omega_\eps^{(4)}(\lambda)$ is a scalar multiple
of a single operator $S(0)-I$ in $\calN$. 
Identifying $L^2(\bbR,\calN)$ with $L^2(\bbR)\otimes \calN$,
we can write the SHO with the symbol $\Omega_\eps^{(4)}$ as 
$$
\SHO(\Omega_\eps^{(4)})
=
(P_- 2\bpsi_\eps\bchi_0 P_+)\otimes \tfrac12 (S(0)-I)
+
(P_- 2\bpsi_\eps\bchi_0 P_+)^*\otimes \tfrac12 (S(0)^*-I).
$$
Using Lemma~\ref{lma.c1}, we see that for even $m$ 
\begin{equation}
\Tr(\SHO(\Omega_\eps^{(4)}))^m
=
2\Tr\abs{P_- \bOmega_\eps^{(4)} P_+}^m
=
2\Tr \abs{P_- 2 \bpsi_\eps \bchi_0 P_+}^m 
\sum_{n=1}^N a_n(0)^m
\label{c25}
\end{equation}
(recall that $a_n(0)$ are the eigenvalues of $\tfrac12\abs{S(0)-I}$). 
Thus, the problem reduces to analysing the \emph{scalar} SHO with the 
symbol $2\psi_\eps\chi_0$.

\subsection{Replacing $2\psi_\eps\chi_0$ by a model symbol : $D_\eps^{(4)}\to D_\eps^{(5)}$}\label{sec.c7}
It turns out that the leading term of the asymptotics of the trace in the r.h.s. of  \eqref{c25} 
is independent of the details of the behaviour of the function $\psi(\lambda)$, 
as long as it converges sufficiently fast to the limits $\mp1/2$ as $\lambda\to\pm\infty$. 
Thus, we are going to replace the symbol $2\psi_\eps\chi_0$ by an explicit model symbol. 
Let 
\begin{equation}
\zeta(\lambda)=-\frac2\pi \tan^{-1}(\lambda),
\quad
\zeta_\eps(\lambda)=\zeta(\lambda/\eps), 
\quad
\lambda\in \bbR.
\label{c26}
\end{equation}
We set 
$$
D_\eps^{(5)}=\SHO(\Omega_\eps^{(5)}),
\quad
\Omega_\eps^{(5)}(\lambda)
=
(\zeta_\eps(\lambda)-\zeta(\lambda))\tfrac12(S(0)-I), 
\quad
\lambda\in\bbR
$$
and prove
\begin{lemma}\label{cr.c7}
Let Assumption~\ref{ass1} hold true, and assume \eqref{a6aa}.
Let $\zeta$ be given by \eqref{c26}. Then 
for $q=\max\{2,p\}$ one has
$$
\norm{D_\eps^{(4)}-D_\eps^{(5)}}_q=O(1), \quad \eps\to+0.
$$
\end{lemma}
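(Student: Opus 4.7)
My plan is to reduce the estimate to a scalar Hankel operator and then exploit a dilation invariance. The difference of symbols factors as
\[
\Omega_\eps^{(4)}(\lambda) - \Omega_\eps^{(5)}(\lambda) = \tfrac{1}{2}F_\eps(\lambda)\bigl(S(0) - I\bigr), \qquad F_\eps := 2\psi_\eps\chi_0 - \zeta_\eps + \zeta,
\]
a scalar function times a fixed operator in $\calN$. Under the identification $L^2(\bbR,\calN) \cong L^2(\bbR)\otimes\calN$ the associated Hankel piece is $(P_-\mathbf{F}_\eps P_+)\otimes\tfrac{1}{2}(S(0)-I)$, so by Lemma~\ref{lma.c1} together with the tensor product identity $\|A\otimes B\|_q = \|A\|_q\|B\|_q$,
\[
\|D_\eps^{(4)} - D_\eps^{(5)}\|_q \leq 2^{1/q}\|P_-\mathbf{F}_\eps P_+\|_q \cdot \tfrac{1}{2}\|S(0)-I\|_q.
\]
By \eqref{c18}, $S(0)-I \in \Sch_p \subset \Sch_q$ (recall $q \geq p$), so the last factor is an $\eps$-independent constant. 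It remains to prove $\|P_-\mathbf{F}_\eps P_+\|_q = O(1)$.

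The key structural step is a decomposition $F_\eps = g_\eps + \Phi$ valid for all sufficiently small $\eps$. Put $g := 2\psi - \zeta$ and $g_\eps(\lambda) := g(\lambda/\eps)$. By the stronger hypothesis \eqref{a6aa}, $2\psi_\eps(\lambda) = -\sign(\lambda)$ whenever $|\lambda| > R\eps$. Fixing a neighbourhood $(-c, c)$ of $0$ on which $\chi_0 \equiv 1$, for every $\eps < c/R$ this forces $2\psi_\eps(1-\chi_0) = -\sign\cdot(1-\chi_0)$, and a direct algebraic manipulation yields
\[
F_\eps = (2\psi_\eps - \zeta_\eps) - 2\psi_\eps(1-\chi_0) + \zeta = g_\eps + \Phi, \qquad \Phi := \zeta + \sign\cdot(1-\chi_0),
\]
where $\Phi$ is \emph{independent} of $\eps$. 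The $\eps$-dependence is thereby isolated in $g_\eps$, which is obtained from the fixed profile $g$ by the dilation $\lambda \mapsto \lambda/\eps$.

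For the $g_\eps$-piece I introduce the unitary $(U_\eps f)(x) := \eps^{-1/2}f(x/\eps)$ on $L^2(\bbR)$. Hardy spaces are dilation-invariant, so $U_\eps P_\pm U_\eps^* = P_\pm$, and a direct check gives $U_\eps\mathbf{g}U_\eps^* = \mathbf{g}_\eps$. Hence $P_-\mathbf{g}_\eps P_+$ is unitarily equivalent to $P_-\mathbf{g}P_+$ and
\[
\|P_-\mathbf{g}_\eps P_+\|_q = \|P_-\mathbf{g}P_+\|_q,
\]
independent of $\eps$. Its finiteness follows from the classical Hilbert-Schmidt formula for scalar Hankel operators: $g \in C^\infty(\bbR)$ with $g(\lambda) = O(1/|\lambda|)$ and $g'(\lambda) = O(1/\lambda^2)$ at infinity (both $2\psi$ and $\zeta$ approach $\mp 1$ at $\pm\infty$ precisely at rate $1/|\lambda|$), so $g \in H^1(\bbR)$ and $P_-\mathbf{g}P_+ \in \Sch_2 \subset \Sch_q$ for every $q \geq 2$. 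The same verification applies to $\Phi$: it is $C^\infty$ because $(1-\chi_0)$ annihilates the jump of $\sign$ at $0$, and $\Phi(\lambda) = O(1/|\lambda|)$ at infinity by an elementary computation. Summing the two bounds gives $\|P_-\mathbf{F}_\eps P_+\|_q = O(1)$, completing the proof. The main conceptual step—and the one place where the structure of the problem enters—is the decomposition $F_\eps = g_\eps + \Phi$: it encodes the universality principle that the detailed shape of $\psi$ is irrelevant, because $\psi_\eps$ and $\zeta_\eps$ produce the same leading-order jump at the origin, and their difference is a smooth dilated profile whose Hankel Schatten norm is preserved by dilation.
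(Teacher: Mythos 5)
Your proof is correct and follows essentially the same route as the paper: the same tensor factorisation of the symbol, the same decomposition of $2\psi_\eps\chi_0-(\zeta_\eps-\zeta)$ into a dilated profile $g_\eps$ plus an $\eps$-independent remainder, and the same reduction to Hilbert--Schmidt bounds for scalar Hankel operators (the paper's Lemma~\ref{lma.c6}). Your use of the dilation unitary $U_\eps$ to see that $\norm{P_-\mathbf{g}_\eps P_+}_q$ is $\eps$-independent is a clean repackaging of the paper's explicit change of variables in the Hilbert--Schmidt integral, and invoking $g\in H^1(\bbR)$ for the finiteness is equivalent to the paper's direct verification that the commutator kernel $\bigl(g(x)-g(y)\bigr)/(x-y)$ is square-integrable.
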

The proof is based on an elementary Hibert-Schmidt estimate for 
scalar HOs.

\subsection{Computing $\Tr(D_\eps^{(5)})^m$}\label{sec.c8}

Just as in \eqref{c25}, for even $m$ we have
\begin{equation}
\Tr(D_\eps^{(5)})^m 
=
2\Tr\abs{P_- (\bzeta_\eps-\bzeta) P_+}^m 
\sum_{n=1}^N a_n (0)^m. 
\label{c30}
\end{equation}
It turns out that the operator $\abs{P_- (\bzeta_\eps-\bzeta) P_+}$ can be
explicitly identified and the asymptotics of its traces can be computed. 
Let $J$ be the involution in $L^2(\bbR)$ given by $(Jf)(x)=f(-x)$. 
Evidently, $J$ maps $H^2_+(\bbR)$ into $H^2_-(\bbR)$ and vice versa. 
Consider the operator
$$
K_\eps=-iP_+(\bzeta_\eps-\bzeta)JP_+
\quad
\text{ in $H^2_+(\bbR)$}
$$
for $0<\eps<1$. 
Since $\zeta$ is odd, we have $(\bzeta_\eps-\bzeta)J=-J(\bzeta_\eps-\bzeta)$
and so $K_\eps$ is self-adjoint. 
We prove 
\begin{lemma}\label{lma.c8}
For $\eps\in(0,1)$, we have $K_\eps=\abs{P_- (\bzeta_\eps-\bzeta) P_+}$. 
This operator belongs to the trace class and
for all integers $m\geq1$
\begin{equation}
\Tr K_\eps^m 
=
\abs{\log \eps}\frac1{2\pi^2}
\int_{-\infty}^\infty (\cosh x)^{-m} dx+O(1)
\quad 
\eps\to+0.
\label{c31}
\end{equation}
\end{lemma}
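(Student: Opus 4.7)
The plan is to diagonalise $K_\eps$ by reducing it to an explicit Hankel integral operator on $L^2(0,\infty)$. First I would pass to the half-line picture via the Paley--Wiener isomorphism $\calU:L^2(0,\infty)\to H^2_+(\bbR)$, $(\calU g)(\lambda)=\frac{1}{\sqrt{2\pi}}\int_0^\infty g(t)e^{i\lambda t}dt$, and its partner $\calU_-:L^2(0,\infty)\to H^2_-(\bbR)$ defined with $e^{-i\lambda t}$. Crucially, if $f=\calU g$ then $Jf=\calU_- g$, so $JP_+=\calU_-\calU^{-1}$ on $H^2_+$. Using $\widehat{\zeta'}(\tau)=-\sqrt{2/\pi}\,e^{-|\tau|}$ to obtain $\widehat{\zeta}(\tau)=i\sqrt{2/\pi}\,e^{-|\tau|}/\tau$, a direct computation of $\calU^{-1}P_+(\bzeta_\eps-\bzeta)\calU_-$ as an integral operator (via the Plancherel formula $\langle \calU g', \bOmega\calU_- g\rangle=\int_0^\infty\int_0^\infty\overline{g'(s)}g(t)\widehat{\Omega}(s+t)\,ds\,dt/\sqrt{2\pi}$) shows that $T:=\calU^{-1}K_\eps\calU$ is the integral operator on $L^2(0,\infty)$ with kernel
\[
T(s,t)=\frac{1}{\pi}\cdot\frac{e^{-\eps(s+t)}-e^{-(s+t)}}{s+t}=\frac{1}{\pi}\int_\eps^1 e^{-r(s+t)}\,dr,
\]
which is pointwise positive for $\eps\in(0,1)$.

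For the identification $K_\eps=|\Gamma|$ with $\Gamma=P_-(\bzeta_\eps-\bzeta)P_+$, I would exploit $P_-=JP_+J$ together with the anticommutation $J(\bzeta_\eps-\bzeta)=-(\bzeta_\eps-\bzeta)J$, which holds because $\zeta_\eps-\zeta$ is odd. A brief manipulation gives
\[
\Gamma^*\Gamma=P_+(\bzeta_\eps-\bzeta)JP_+J(\bzeta_\eps-\bzeta)P_+=-[P_+(\bzeta_\eps-\bzeta)JP_+]^2=K_\eps^2,
\]
so $K_\eps^2=\Gamma^*\Gamma$. Combined with the self-adjointness of $K_\eps$, it remains to verify $K_\eps\geq 0$; this is immediate from the factorisation $T=\frac{1}{\pi}L^*L$, where $L:L^2(0,\infty)\to L^2((\eps,1),dr)$ is the truncated Laplace transform $(Lf)(r)=\int_0^\infty e^{-rt}f(t)\,dt$, a direct consequence of the integral representation above.

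Next, to compute $\Tr K_\eps^m=\pi^{-m}\Tr(LL^*)^m$, I would note that $LL^*$ on $L^2((\eps,1),dr)$ is the integral operator with kernel $(r+r')^{-1}$, since $\langle e^{-r\cdot},e^{-r'\cdot}\rangle=(r+r')^{-1}$. The change of variable $r=e^{-u}$, $u\in(0,L)$ with $L=|\log\eps|$, combined with the standard unitary $g\mapsto e^{-u/2}g(e^{-u})$ identifying $L^2((\eps,1),dr)$ with $L^2(0,L)$, transforms $LL^*$ into the truncated convolution operator $W_L=P_{[0,L]}C\,P_{[0,L]}$ on $L^2(0,L)$, where $C$ acts on $L^2(\bbR)$ by convolution with $\phi(u)=(2\cosh(u/2))^{-1}$, whose Fourier transform is $\hat\phi(\xi)=\pi/\cosh(\pi\xi)$.

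The final step is a Szeg\H{o}-type asymptotic for $\Tr W_L^m$. Writing this trace as the $m$-fold integral $\int_{[0,L]^m}\phi(u_1-u_2)\phi(u_2-u_3)\cdots\phi(u_m-u_1)\,du_1\cdots du_m$ and comparing with the integral over $(u_1,u_2-u_1,\dots,u_m-u_{m-1})\in[0,L]\times\bbR^{m-1}$, which equals $L\cdot\phi^{*m}(0)=\frac{L}{2\pi}\int_\bbR\hat\phi(\xi)^m\,d\xi$ by Parseval, the difference is bounded using the exponential decay $\phi(u)\lesssim e^{-|u|/2}$, yielding a boundary correction of size $O(1)$ uniformly in $L$. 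Thus $\Tr W_L^m=\frac{L}{2\pi}\int\hat\phi(\xi)^m\,d\xi+O(1)$. Dividing by $\pi^m$ and substituting $x=\pi\xi$ gives exactly \eqref{c31}; the trace class property of $K_\eps$ is obtained separately from $\Tr K_\eps=\pi^{-1}\Tr LL^*=\frac{1}{2\pi}\int_\eps^1 r^{-1}dr<\infty$. The main obstacle is the $O(1)$ control of the boundary correction, which is more delicate than the classical $o(L)$ Szeg\H{o} estimate; it relies on an explicit but somewhat intricate book-keeping of the $m$ integration variables, exploiting the rapid decay of $\phi$ to absorb the corner contributions.
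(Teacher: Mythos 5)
Your proof is correct, and your first half (Fourier passage to $L^2(\bbR_+)$, the kernel $\pi^{-1}(e^{-\eps(s+t)}-e^{-(s+t)})/(s+t)$, its factorisation through the truncated Laplace transform, positivity, and the identity $K_\eps^2=\Gamma^*\Gamma$ combined with $K_\eps\geq0$) is essentially identical to the paper's, just phrased with $\Gamma^*\Gamma=K_\eps^2$ rather than the paper's $\Gamma=-iJK_\eps$. The two proofs genuinely diverge at the trace-asymptotics step. The paper writes $\Tr\widehat K_\eps^m=\Tr(\1_{(\eps,1)}(\tfrac1\pi\calL^2)\1_{(\eps,1)})^m$ and then invokes two external results: the Laptev--Safarov inequality \eqref{i5} to pass from the $m$-th power of the compression to the compression of the $m$-th power at a cost of $O(1)$, and the explicit spectral diagonalisation of the Carleman operator $\calL^2$ to compute the latter trace exactly as $|\log\eps|\tfrac1{2\pi^2}\int(\cosh x)^{-m}dx$. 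You instead apply the logarithmic change of variable $r=e^{-u}$ to turn the compressed Carleman operator $\1_{(\eps,1)}\calL^2\1_{(\eps,1)}$ into the finite Wiener--Hopf truncation $W_L$ of convolution by $\phi(u)=(2\cosh(u/2))^{-1}$ on $L^2(0,L)$, $L=|\log\eps|$, and then argue by the Szeg\H{o} philosophy directly. This is a more self-contained route (it avoids citing both \cite{LaSa} and the calculation in \cite{FP}), at the price of having to control the boundary term in the Szeg\H{o} asymptotics. You flag that control as intricate, but it is in fact routine here: after passing to increments $v_j=u_j-u_{j+1}$, the defect $L\phi^{*m}(0)-\Tr W_L^m$ is $\int_{\bbR^{m-1}}\min\{L,\mathrm{osc}(v)\}\,\phi(v_1)\cdots\phi(v_{m-1})\phi(v_1+\cdots+v_{m-1})\,dv$ where $\mathrm{osc}(v)$ is the oscillation of the partial sums, and since $\mathrm{osc}(v)\leq\sum_j|v_j|$ this is bounded uniformly in $L$ by the exponential decay of $\phi$. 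Your trace class argument ($\Tr K_\eps=\tfrac1{2\pi}\int_\eps^1 r^{-1}dr<\infty$ for the positive operator $K_\eps$) is also a clean alternative to the paper's observation that $\1_{(\eps,1)}\calL$ is Hilbert--Schmidt.
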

In the proof of this lemma, we use a calculation from \cite{FP}.
Of course, \eqref{c30} and \eqref{c31} give the statement $P^{(5)}$, see \eqref{c4}.

\section{Spectral localization}\label{sec.d}

Here we prove Lemmas~\ref{lma.c2} and \ref{lma.b0}. 
We start with a proposition which is essentially well known:

\begin{lemma}\label{lma.d1}
\begin{enumerate}[\rm (i)]
\item
Let Assumption~\ref{ass1}(1) hold true
and let $\f\in C(\bbR)$ be such that $\f(x)\to\const$ as $x\to+\infty$. 
Then
$$
\f(H)-\f(H_0)\in\Sch_\infty.
$$
In particular, under the assumption \eqref{a6} on $\psi$, 
the operator $D_\eps(\lambda)$ is compact 
for any $\lambda\in\bbR$ and $\eps>0$.
\item
Let Assumption~\ref{ass1}(1), (2) hold true
and let $\f\in C^\infty(\bbR)$ be such that $\f(x)=\const$ 
for all sufficiently large $x>0$. Then 
$$
\f(H)-\f(H_0)\in \Sch_p.
$$
In particular, under the assumption \eqref{a6aa} on $\psi$, 
the operator $D_\eps(\lambda)$ belongs to $\Sch_p$ 
for any $\lambda\in\bbR$ and $\eps>0$.
\end{enumerate}
\end{lemma}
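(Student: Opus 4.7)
The plan is to prove both statements by reducing them, via standard functional-calculus approximation arguments, to two basic resolvent-level facts: the Birman identity yields $R(z)-R_0(z)\in\Sch_\infty$ under \eqref{b2}, and the hypothesis \eqref{b2a} promotes this to $\Sch_p$ at the level of the $k$th powers of the resolvents. Both parts also exploit the lower semi-boundedness of $H_0$ and $H$ in order to disregard the behaviour of $\varphi$ at $-\infty$.

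For part (i), fix any $c>\max\{-\inf\sigma(H_0),-\inf\sigma(H)\}$; modify $\varphi$ on $(-\infty,-c-1)$ so as to make it a function of class $C(\bbR)$ with a common limit at $\pm\infty$. This modification does not change $\varphi(H_0)$ or $\varphi(H)$, and after subtracting the common constant we may assume $\varphi\in C_0(\bbR)$. The Birman-type resolvent identity
\begin{equation*}
R(z)-R_0(z)=-(GR(\bar z))^*V_0\,(GR_0(z)),\qquad \Im z\neq 0,
\end{equation*}
makes sense thanks to \eqref{b2} (which makes $V$ $H_0$-form compact and guarantees that $GR(z)$ is a well-defined compact operator), and it shows that $R(z)-R_0(z)\in\Sch_\infty$. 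By the Stone--Weierstrass theorem, any $\varphi\in C_0(\bbR)$ is a uniform limit of polynomials in $(H_0+bI)^{-1}$ and $(H_0+bI)^{-1}{}^*$; a standard telescoping argument then upgrades this uniform convergence to operator-norm convergence of $\varphi_n(H)-\varphi_n(H_0)$ in $\Sch_\infty$, giving $\varphi(H)-\varphi(H_0)\in\Sch_\infty$. Since $\psi_\eps(\,\cdot\,-\lambda)$ tends to the constant $-1/2$ at $+\infty$, the compactness of $D_\eps(\lambda)$ follows.

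For part (ii), set $A=(H+bI)^{-k}$ and $A_0=(H_0+bI)^{-k}$; both are bounded self-adjoint operators whose spectra lie in some compact interval $[0,M]$, and by \eqref{b2a} we have $A-A_0\in\Sch_p$. As in (i), modify $\varphi$ below $-c-1$ so that it is constant there, without affecting $\varphi(H_0)$ or $\varphi(H)$. Define $f$ on $[0,M]$ by $f(y)=\varphi(y^{-1/k}-b)$ for $y>0$ and by $f(0)=\lim_{x\to+\infty}\varphi(x)$. Because $\varphi\in C^\infty(\bbR)$ is constant both for large $x$ and for $x<-c-1$, the function $f$ extends to a $C^\infty$ function on a neighbourhood of $[0,M]$, and satisfies $f(A)=\varphi(H)$, $f(A_0)=\varphi(H_0)$. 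For any polynomial $P$ the elementary identity $P(A)-P(A_0)=\sum_j a_j\sum_{i}A^i(A-A_0)A_0^{j-1-i}$ yields
\begin{equation*}
\|P(A)-P(A_0)\|_p\leq C\,\|P'\|_{L^\infty([0,M])}\,\|A-A_0\|_p,
\end{equation*}
with $C$ depending only on $M$. Approximating $f$ together with its derivative uniformly on $[0,M]$ by polynomials and passing to the limit in $\Sch_p$ gives $\varphi(H)-\varphi(H_0)=f(A)-f(A_0)\in\Sch_p$. Under \eqref{a6aa} the function $\psi_\eps(\,\cdot\,-\lambda)$ meets the hypothesis, proving the second claim.

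The only delicate point is the justification of the Birman identity and the boundedness/compactness of $GR(z)$ when $V$ is merely $H_0$-form compact; this is standard (see e.g.\ \cite{Yafaev1}), and follows because \eqref{b2} makes $G$ $H_0$-form bounded with relative bound zero, so that $H=H_0+V$ is well defined as a form sum and $G(H+bI)^{-1/2}$ is compact. Everything else is routine functional calculus; the Stone--Weierstrass/telescoping step in (i) and the polynomial approximation in (ii) are the mechanisms transferring the resolvent-level Schatten information to the level of $\varphi(H)-\varphi(H_0)$.
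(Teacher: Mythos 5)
Part (i) of your proposal is essentially the paper's argument: normalise $\varphi$ to vanish at infinity, use the resolvent identity to get $R(-b)-R_0(-b)\in\Sch_\infty$, and then invoke Weierstrass approximation by polynomials in $(x+b)^{-1}$; since $\Sch_\infty$ is closed under operator-norm limits and each polynomial difference is manifestly compact, the limit is compact. That part is fine and matches the paper.

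Part (ii) contains a genuine gap. You claim that the telescoping identity
\[
P(A)-P(A_0)=\sum_j a_j\sum_{i=0}^{j-1}A^i(A-A_0)A_0^{j-1-i}
\]
implies
\[
\|P(A)-P(A_0)\|_p\leq C\,\|P'\|_{L^\infty([0,M])}\,\|A-A_0\|_p,
\]
but it does not. What the identity actually gives is the bound
$\bigl(\sum_j |a_j|\,j\,M^{j-1}\bigr)\|A-A_0\|_p$, and this coefficient sum is \emph{not} controlled by $\|P'\|_{L^\infty([0,M])}$: for example, after rescaling to $[0,M]$ the Chebyshev polynomials $T_n$ have exponentially large coefficients while $\|T_n'\|_\infty$ grows only polynomially. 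Consequently your polynomial-approximation scheme does not close: even if $P_n\to f$ and $P_n'\to f'$ uniformly, the telescoping bound gives no control of $\|(P_n-P_m)(A)-(P_n-P_m)(A_0)\|_p$, so there is no Cauchy argument in $\Sch_p$. This is precisely the point where the $\Sch_\infty$ argument of part (i) (which only needs closedness under norm limits) fails to carry over to $\Sch_p$ ($p<\infty$). In fact, the inequality you wrote is false for $p=1$ even for genuinely Lipschitz $f$, and for $1<p<\infty$ it is true only as a deep theorem (Potapov--Sukochev), far beyond telescoping. The paper sidesteps this by invoking the standard fact that $C^\infty_0$ functions are $\Sch_p$-operator-smooth, citing Birman--Solomyak's double operator integral result \cite{BS} or the almost-analytic (Helffer--Sj\"ostrand) calculus of \cite{DSj}. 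You need to replace your polynomial step by one of those tools; your reduction $\varphi(H)=f((H+bI)^{-k})$ with $f\in C^\infty$ near $[0,M]$ is fine and exactly sets the stage for either of them.
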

(Item (i) proves Lemma~\ref{lma.b0}.)
\begin{proof}
(i) 
We first note that the values $\f(x)$ for $x<\min\{\inf\sigma(H),\inf\sigma(H_0)\}$ 
do not matter, and we may modify the definition of $\f$ for such $x$
as we wish. Thus, we can represent $\f$ as 
$\f(x)=\const+\f_0(x)$, where $\f_0(x)\to0$ as $\abs{x}\to\infty$. 
So it suffices to prove the statement of the Lemma
for the case $\f\in C(\bbR)$ with $\f(x)\to0$ as $\abs{x}\to\infty$.
Next, let $b>-\min\{\inf\sigma(H_0),\inf\sigma(H)\}$. By the resolvent
identity (see \eqref{e1}), we have
$$
R(-b)-R_0(-b)\in\Sch_\infty.
$$
It follows that
$$
\wt\f(R(-b))
-
\wt\f(R_0(-b))
\in\Sch_\infty
$$
for all polynomials $\wt \f$. By the Weierstrass approximation theorem,
the same is true for all continuous functions $\wt \f$. 
Now choosing $\wt\f$ such that $\wt\f(1/(x+b))=\f(x)$, we obtain 
$$
\f(H)-\f(H_0)
=
\wt\f(R(-b))
-
\wt\f(R_0(-b))
\in\Sch_\infty,
$$
as required. 

(ii)
Similarly to part (i), by subtracting a constant from $\f$ we reduce the situation to 
the case $\f\in C_0^\infty(\bbR)$. 
From the inclusion \eqref{b2a} it is not difficult to deduce that
\begin{equation}
\wt\f((H+bI)^{-k})
-
\wt\f((H_0+bI)^{-k})
\in\Sch_p, 
\quad
\forall \wt \f\in C_0^\infty(\bbR). 
\label{d3}
\end{equation}
The implication \eqref{b2a} $\Rightarrow$ \eqref{d3}
can be obtained by any of a number of standard methods.
For example, it follows from \cite[Theorem 10]{BS};
alternatively, one can use the functional calculus based on 
the almost analytic continuation of $\wt \f$, see e.g. 
\cite[Section 8]{DSj}. 
Now choosing $\wt\f$ such that $\wt\f(1/(x+b)^k)=\f(x)$, we obtain 
$$
\f(H)-\f(H_0)
=
\wt\f((H+bI)^{-k})
-
\wt\f((H_0+bI)^{-k})
\in\Sch_p,
$$
as required. 
\end{proof}

\begin{proof}[Proof of Lemma~\ref{lma.c2}]

Let $\chi_\infty=1-\chi_0$. 
First consider the product 
\begin{multline}
\chi_\infty(H_0)D_\eps
=
\chi_\infty(H_0)(\psi_\eps(H)-\psi_\eps(H_0))
\\
=
(\chi_\infty(H_0)-\chi_\infty(H))\psi_\eps(H)
+
(\chi_\infty(H)\psi_\eps(H)-\chi_\infty(H_0)\psi_\eps(H_0)).
\label{d4}
\end{multline}
For the first term in the r.h.s. here we get 
\begin{equation}
\norm{(\chi_\infty(H_0)-\chi_\infty(H))\psi_\eps(H)}_p
\leq
\norm{\chi_\infty(H_0)-\chi_\infty(H)}_p
\norm{\psi}_{L^\infty}
<\infty
\label{d5}
\end{equation}
by Lemma~\ref{lma.d1}(ii). 
Consider the second term in the r.h.s. of \eqref{d4}. 
Denote $\psi_0(x)=-\tfrac12\sign(x)$. 
By our assumption \eqref{a6aa} on $\psi$, we have
\begin{equation}
\chi_\infty\psi_\eps=\chi_\infty\psi_0
\quad
\text{ for all sufficiently small $\eps>0$,}
\label{d6}
\end{equation}
and therefore for such $\eps$ the second term in the r.h.s. of \eqref{d4} 
becomes
$$
\chi_\infty(H)\psi_0(H)-\chi_\infty(H_0)\psi_0(H_0),
$$
which is in $\Sch_p$
by Lemma~\ref{lma.d1}(ii). 
Together with \eqref{d5}, this yields
\begin{equation}
\norm{D_\eps-\chi_0(H_0)D_\eps}_p
=
\norm{\chi_\infty(H_0)D_\eps}_p
=
O(1)
\label{d7}
\end{equation}
as $\eps\to+0$. Similarly,
\begin{multline}
\norm{\chi_0(H_0)D_\eps-\chi_0(H_0)D_\eps\chi_0(H_0)}_p
=
\norm{\chi_0(H_0)D_\eps\chi_\infty(H_0)}_p
\\
\leq 
\norm{\chi_0}_{L^\infty}
\norm{D_\eps\chi_\infty(H_0)}_p
=
O(1)
\label{d8}
\end{multline}
as $\eps\to+0$. Finally, denote $\wt \chi_\infty=1-\chi_0^2$; 
we have, using \eqref{d6}, 
\begin{multline}
\chi_0(H_0)D_\eps\chi_0(H_0)-D_\eps^{(1)}
=
\chi_0(H_0)(\wt \chi_\infty(H)\psi_\eps(H)-\wt \chi_\infty(H_0)\psi_\eps(H_0))\chi_0(H_0)
\\
=
\chi_0(H_0)(\wt \chi_\infty(H)\psi_0(H)-\wt \chi_\infty(H_0)\psi_0(H_0))\chi_0(H_0)
\label{d9}
\end{multline}
for all sufficiently small $\eps$. 
By Lemma~\ref{lma.d1}(ii), the expression in brackets in the r.h.s. of \eqref{d9}
is in $\Sch_p$, and so we obtain
$$
\norm{\chi_0(H_0)D_\eps\chi_0(H_0)-D_\eps^{(1)}}_p=O(1)
$$
as $\eps\to+0$. 
Combining this with \eqref{d7} and \eqref{d8}, we obtain the claim of the lemma.
\end{proof}

\section{Application of the resolvent identity}\label{sec.e}

\begin{proof}[Proof of Lemma~\ref{lma.c3}]
In fact, this calculation has appeared before in \cite{Push2}. 
It is based on the iterated resolvent identity written in the form
\begin{equation}
R(z)-R_0(z)
=
-(GR_0(\overline{z}))^*Y(z)GR_0(z), 
\quad 
\Im z>0.
\label{e1}
\end{equation}
Let us recall the derivation of \eqref{e1} (see e.g. \cite[Section~1.9]{Yafaev1}). 
In order to avoid inessential technical explanations related to the
operator $G^*$, let us assume here that $G$ is bounded.
Iterating the usual resolvent identity, we get
\begin{align}
R(z)-R_0(z)
=
-R(z)VR_0(z)
&=
-R_0(z)VR_0(z)
+R_0(z)VR(z)VR_0(z)
\label{e1a}
\\
&=-R_0(z)G^*V_0(I-GR(z)G^*V_0)GR_0(z).
\label{e1b}
\end{align}
We also have the identity 
$$
(I-GR(z)G^*V_0)(I+GR_0(z)G^*V_0)=I,
$$
which can be verified by expanding and using \eqref{e1a}. 
Writing $(I+GR_0(z)G^*V_0)^{-1}$ instead of $(I-GR(z)G^*V_0)$ in  
\eqref{e1b} and recalling the definition \eqref{c10} of 
$Y(z)$, we obtain \eqref{e1}.

Let $\f\in C_0^\infty (\delta_0)$ (we will eventually take $\f=\psi_\eps\chi_0^2$). 
By a version of Stone's formula, for any $u\in\calH$ we have
\begin{equation}
(\f(H)u,u)
=
\frac1\pi
\lim_{\epsilon\to+0}
\Im \int_{-\infty}^\infty(R(x+i\epsilon)u,u)\f(x)dx.
\label{e2}
\end{equation}
Subtracting the analogous formula for $(\f(H_0)u,u)$ from \eqref{e2} 
and using \eqref{e1}, we obtain
\begin{multline*}
((\f(H)-\f(H_0))u,u)
\\
=
-\frac1\pi \lim_{\epsilon\to+0}
\Im\int_{-\infty}^\infty
(Y(x+i\epsilon)GR_0(x+i\epsilon)u,GR_0(x-i\epsilon)u)\f(x)dx.
\end{multline*}
Now let us apply this to $u=\chi_0(H_0)f$, where $f\in\Ran \1_{\delta_0}(H_0)$. 
By the strong smoothness assumption \eqref{b4}, we have
$$
GR_0(z)\chi_0(H_0)f
=
\int_{\delta_0}\frac{Z(t)^*\wt f(t)}{t-z}\chi_0(t)dt
=
\int_{\bbR}\frac{Z_0(t)^*\wt f(t)}{t-z}dt, 
\quad 
\wt f=\calF f.
$$
Combining these formulas, we obtain
\begin{multline*}
((\f(H)-\f(H_0))\chi_0(H_0)f,\chi_0(H_0)f)
\\
=
4\pi \lim_{\epsilon\to+0} \Im 
\int_\bbR dx \int_\bbR dt \int_\bbR ds
(M_\epsilon(x,t,s)\wt f(t),\wt f(s)),
\end{multline*}
where 
$$
M_\epsilon(x,t,s)
=
Z_0(s)\frac{1}{2\pi i}\frac{1}{s-x-i\epsilon} Y(x+i\epsilon)\f(x)\biggl(-\frac1{2\pi i}\biggr)
\frac1{x-t+i\epsilon}Z_0(t)^*.
$$
Now let us use this identity with $\f\chi_0$ instead of $\f$. 
Recalling the formulas \eqref{a11} for $P_\pm$, we obtain 
$$
\chi_0(H_0)
(\f(H)\chi_0(H)-\f(H_0)\chi_0(H_0))
\chi_0(H_0)
=
4\pi \Im (\bZ_0 P_- \bY_0 \pmb\f P_+ \bZ_0^*).
$$
Finally, we substitute $\f=\psi_\eps\chi_0$ to obtain the required identity.
\end{proof}

\section{Swapping $Z_0$ and $P_\pm$ }\label{sec.f}

We start by giving the following corollary of the general results of \cite[Section 6.9]{Peller}:

\begin{lemma}\label{lma.f1}
Let $\fh_1$, $\fh_2$ be Hilbert spaces and let $\Omega$ be a function on $\bbR$
with values in the set of compact operators acting from $\fh_1$ to $\fh_2$. 
Let $q>1$ and $\gamma>1/q$; 
assume that $\Omega\in C^\gamma(\bbR,\Sch_q)$ and $\supp\Omega\subset(-r,r)$. 
Then the operators
$$
P_\mp\bOmega P_\pm: L^2(\bbR,\fh_1)\to L^2(\bbR,\fh_2)
$$
belong to $\Sch_q$ with the norm bound
\begin{equation}
\norm{P_\mp \bOmega P_\pm}_q 
\leq
C(q,\gamma,r) \norm{\Omega}_{C^\gamma(\bbR,\Sch_q)}.
\label{f1}
\end{equation}
\end{lemma}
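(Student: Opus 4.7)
My plan is to recognize $P_-\bOmega P_+$ as a vector-valued Hankel operator and appeal directly to the operator-valued Schatten-class characterization of Hankel operators due to Peller. The statement about $P_+\bOmega P_-$ then follows immediately by taking adjoints, since $(P_+\bOmega P_-)^* = P_-\bOmega^* P_+$ and $\Omega \mapsto \Omega^*$ is an isometry of $C^\gamma(\bbR, \Sch_q)$ (using $\norm{X^*}_q = \norm{X}_q$).

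The first step is to recall Peller's theorem in the operator-valued setting (see \cite[Sect.~6.9]{Peller}): for $0 < q < \infty$ and an operator-valued symbol $\Omega:\bbR\to\Sch_q(\fh_1,\fh_2)$, one has
$$
\norm{P_-\bOmega P_+}_{\Sch_q} \leq C_q\norm{\Omega}_{B^{1/q}_{qq}(\bbR,\Sch_q)},
$$
where $B^{1/q}_{qq}(\bbR,\Sch_q)$ is the vector-valued Besov class of smoothness $1/q$. This is the exact analogue of the classical scalar Peller theorem, and the proof in \cite{Peller} proceeds by reducing the operator-valued case to the scalar case through a dyadic decomposition argument.

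The second step is the standard embedding
$$
C^\gamma(\bbR,\Sch_q) \cap \{\Omega : \supp\Omega \subset (-r,r)\} \hookrightarrow B^{1/q}_{qq}(\bbR,\Sch_q), \qquad \gamma > 1/q,
$$
with a norm bound depending on $r$, $q$, $\gamma$. Indeed, using the modulus-of-continuity characterization of $B^{1/q}_{qq}$, one has
$$
\norm{\Omega}_{B^{1/q}_{qq}}^q \sim \norm{\Omega}_{L^q}^q + \int_0^\infty \Bigl(\sup_{\abs{h}\leq t}\norm{\Omega(\cdot+h)-\Omega(\cdot)}_{L^q(\bbR,\Sch_q)}\Bigr)^q \frac{dt}{t^{2}},
$$
and the Hölder bound together with the compact support controls the integrand by $\min(t^{\gamma q}, 1) \cdot (2r)$ up to a constant, so the integral converges precisely because $\gamma q > 1$. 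Combining this with the Peller estimate yields \eqref{f1}.

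The only potentially delicate point is that the vector-valued version of Peller's theorem is stated in \cite{Peller} for the unit circle, whereas we work on the line; but the transference argument (via conformal mapping $\bbD \to \bbC_+$) is standard and the compact support of $\Omega$ ensures no boundary issue at infinity. The constant in \eqref{f1} then depends on $q$, $\gamma$ and $r$ exactly as claimed.
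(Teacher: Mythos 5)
Your proposal is correct and follows essentially the same route as the paper: invoke Peller's operator-valued Hankel characterization $\norm{P_-\bOmega P_+}_{\Sch_q}\lesssim\norm{\Omega}_{B^{1/q}_{q,q}(\Sch_q)}$, transfer between circle and line by the standard conformal map, and embed compactly supported $C^\gamma$ functions into $B^{1/q}_{q,q}$ using $\gamma q>1$. The only cosmetic difference is that the paper performs the conformal transfer first and then verifies finiteness of the Besov double-integral norm on $\bbT$ directly, whereas you state the line version of Peller's estimate and verify the embedding via the modulus-of-continuity characterization on $\bbR$; both come to the same thing, and your observation that compact support controls the large-$t$ part of the integral while $\gamma q>1$ controls the small-$t$ part is exactly the point.
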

In the product $P_\mp\bOmega P_\pm$, the projection $P_\pm$ on the right acts in $L^2(\bbR,\fh_1)$ and 
the projection $P_\mp$ on the left acts in $L^2(\bbR,\fh_2)$. 
Notation $\Sch_q$ in the statement of the lemma is used in two different senses:
in $C^\gamma(\bbR,\Sch_q)$ this is the Schatten class of operators acting from $\fh_1$ to $\fh_2$
and in the line above \eqref{f1} it is the Schatten class of operators acting from 
$L^2(\bbR,\fh_1)$ to $L^2(\bbR,\fh_2)$. 

\begin{proof}
The relevant results in \cite{Peller} are stated for functions 
on the unit circle $\bbT$ rather than on the real line. In order to make the 
connection, consider the unitary operator
$U_j:L^2(\bbT,\fh_j)\to L^2(\bbR,\fh_j)$, $j=1,2$, corresponding to the standard 
conformal map from the unit circle to the real line:
$$
(U_jf)(x)=\frac1{\sqrt{\pi}}\frac1{x+i}f\left(\frac{x-i}{x+i}\right),
\quad x\in\bbR, 
\quad f\in L^2(\bbT,\fh_j).
$$
Then 
$$
U_j^* P_\pm U_j=p_\pm, 
$$
where $p_\pm$ are the orthogonal projections in $L^2(\bbT,\fh_j)$ 
onto the Hardy classes $H^2_\pm(\bbT,\fh_j)$,
and 
$$
U_2^* \bOmega U_1=\bomega,
$$
where
$\bomega$ is the operator of multiplication by the function $\omega$ on $\bbT$ 
obtained from $\Omega$ by the change of variable:
\begin{equation}
\omega(e^{i\theta})=\Omega\biggl(i\frac{1+e^{i\theta}}{1-e^{i\theta}}\biggr),
\quad 
e^{i\theta}\in\bbT.
\label{f2}
\end{equation}
Corollary 9.4 from \cite{Peller} states that $p_\mp\bomega p_\pm\in\Sch_q$ if and only if
$\bomega$ belongs to the operator-valued Besov class
$B^{1/q}_{q,q}(\bbT,\Sch_q)$. 
This class for $q>1$ is defined as the class of all
$\Sch_q$-valued functions $\omega$ on $\bbT$ such that the norm
given by 
\begin{equation}
\norm{\omega}_{B^{1/q}_{q,q}}^q
=
\int_{-\pi}^\pi
\int_{-\pi}^\pi
\frac{\norm{\omega(e^{i(\theta+\tau)})-\omega(e^{i\theta})}_q^q}{\abs{e^{i\tau}-1}^2}
d\theta d\tau
\label{f3}
\end{equation}
is finite. 
(In fact, the norm \eqref{f3} vanishes on constant functions, so the precise 
definition of this Besov class involves taking a quotient over constants.)
One also has the corresponding norm bound
$$
\norm{p_\mp \bomega p_\pm}_q
\leq 
C_q \norm{\omega}_{B^{1/q}_{q,q}}.
$$
By our assumption and by \eqref{f2}, we have
$\omega\in C^\gamma(\bbT,\Sch_q)$:
$$
\sup_\theta\norm{\omega(e^{i(\theta+\tau)})-\omega(e^{i\theta})}_q
\leq
C\abs{e^{i\tau}-1}^\gamma.
$$
Thus, the Besov norm in \eqref{f3} is finite if $q\gamma>1$,
and we obtain \eqref{f1}.
\end{proof}

\begin{proof}[Proof of Lemma~\ref{lma.c4}]
We have
\begin{multline}
D_\eps^{(2)}-D_\eps^{(3)}
=
4\pi\Im((\bZ_0 P_--P_-\bZ_0)\bY_0\bpsi_\eps\bchi_0 P_+\bZ_0^*)
\\
+
4\pi\Im (P_- \bZ_0\bY_0\bpsi_\eps\bchi_0(P_+\bZ_0^*-\bZ_0^*P_+)).
\label{f4}
\end{multline}
Consider the first term in the r.h.s.; we have
\begin{equation}
\bZ_0 P_--P_-\bZ_0
=
P_+\bZ_0 P_- - P_-\bZ_0 P_+.
\label{f5}
\end{equation}
Since $Z_0\in C^\gamma(\bbR,\Sch_{2p})\subset C^\gamma(\bbR,\Sch_q)$
and $Z_0$ has a compact support, 
we can apply Lemma~\ref{lma.f1} to conclude that the operator
\eqref{f5} belongs to $\Sch_q$. Thus, we have
$$
\sup_{\eps>0}
\norm{(\bZ_0 P_--P_-\bZ_0)\bY_0\bpsi_\eps\bchi_0 P_+\bZ_0^*}_q
\leq
\norm{\bZ_0 P_--P_-\bZ_0}_q
\sup_{\eps>0}
\norm{\bY_0\bpsi_\eps\bchi_0 P_+\bZ_0^*}<\infty.
$$
Similar reasoning applies to the second term in the r.h.s.
of \eqref{f4}.
\end{proof}

\section{Replacing $S_0(\lambda)$ by $S(0)$ }\label{sec.g}

\begin{proof}[Proof of Lemma~\ref{lma.c5}]
We need to prove the estimate
$$
\norm{P_-(\mathbf S_0-S(0))\bpsi_\eps\bchi_0 P_+}_q=O(1), \quad \eps\to+0.
$$
By Lemma~\ref{lma.f1}, it suffices to prove
that
$$
\norm{(S_0-S(0))\psi_\eps\chi_0}_{C^\gamma(\bbR,\Sch_q)}=O(1), \quad \eps\to+0.
$$
In fact, we will prove a uniform estimate in $C^\gamma(\bbR,\Sch_p)\subset C^\gamma(\bbR,\Sch_q)$.
The proof of this estimate is an elementary argument which involves only the H\"older continuity 
of $S_0$ and the definition of $\psi$. 
Denote $\wt S(\lambda)=(S_0(\lambda)-S(0))\chi_0(\lambda)$. 
Let $\lambda_1, \lambda_2\in\bbR$ with $\abs{\lambda_1}\leq\abs{\lambda_2}$.
We need to prove the estimate
\begin{equation}
\norm{
\wt S(\lambda_2)\psi(\lambda_2/\eps)
-
\wt S(\lambda_1)\psi(\lambda_1/\eps)}_p
\leq 
C\abs{\lambda_1-\lambda_2}^\gamma
\label{g3}
\end{equation}
with $C$ independent of $\eps$. 
We have 
\begin{multline}
\wt S(\lambda_2)\psi(\lambda_2/\eps)
-
\wt S(\lambda_1)\psi(\lambda_1/\eps)
\\
=
(\wt S(\lambda_2)-\wt S(\lambda_1))\psi(\lambda_2/\eps)
+
\wt S(\lambda_1)(\psi(\lambda_2/\eps)-\psi(\lambda_1/\eps)).
\label{g2}
\end{multline}
For the first term in the r.h.s., by the H\"older continuity of $\wt S$ we immediately
obtain the required uniform bound:
$$
\norm{(\wt S(\lambda_2)-\wt S(\lambda_1))\psi(\lambda_2/\eps)}_p
\leq
\norm{\wt S(\lambda_2)-\wt S(\lambda_1)}_p
\norm{\psi}_{L^\infty}
\leq
C\abs{\lambda_2-\lambda_1}^\gamma.
$$
For the second term in the r.h.s. of \eqref{g2}, 
we have
\begin{multline*}
\norm{\wt S(\lambda_1)(\psi(\lambda_2/\eps)-\psi(\lambda_1/\eps))}_p
\leq
\norm{\wt S(\lambda_1)}_p
\abs{\psi(\lambda_2/\eps)-\psi(\lambda_1/\eps)}
\\
\leq
C\abs{\lambda_1}^\gamma
\abs{\psi(\lambda_2/\eps)-\psi(\lambda_1/\eps)}.
\end{multline*}
In order to estimate the expression in the  r.h.s., 
let us consider three cases:

\emph{Case 1: $\lambda_1\geq0$ and $\lambda_2\geq0$.}
Recall that we have also assumed $\abs{\lambda_1}\leq\abs{\lambda_2}$, so 
in this case we have $0\leq \lambda_1\leq \lambda_2$. 
Since $\psi(\lambda)=-1/2$ for $\lambda\geq R$, the case $\lambda_1\geq \eps R$ 
is trivial (the difference $\psi(\lambda_2/\eps)-\psi(\lambda_1/\eps)$ vanishes).
So let us assume that $\lambda_1< \eps R$. Then, by the H\"older continuity of $\psi$
(our function $\psi$ is $C^\infty$ smooth),
$$
\abs{\lambda_1}^\gamma \abs{\psi(\lambda_2/\eps)-\psi(\lambda_1/\eps)}
\leq
C\eps^\gamma R^\gamma\abs{\lambda_2/\eps-\lambda_1/\eps}^\gamma
\leq
CR^\gamma\abs{\lambda_2-\lambda_1}^\gamma.
$$

\emph{Case 2: $\lambda_1\leq0$ and $\lambda_2\leq0$.}
This case can be treated exactly as Case 1.

\emph{Case 3: $\lambda_1\lambda_2<0$.}
We have
$$
\abs{\lambda_1}^\gamma \abs{\psi(\lambda_2/\eps)-\psi(\lambda_1/\eps)}
\leq
2\norm{\psi}_{L^\infty}\abs{\lambda_1}^\gamma
\leq
2\norm{\psi}_{L^\infty}\abs{\lambda_2-\lambda_1}^\gamma.
$$
Thus, we have proven the uniform bound \eqref{g3}.
\end{proof}

\section{Replacing $2\psi_\eps\chi_0$ by a model symbol}\label{sec.h}

First let us prove an elementary estimate for scalar-valued symbols:
\begin{lemma}\label{lma.c6}
Let $\psi$ satisfy \eqref{a6aa} and let $\zeta$ be given by \eqref{c26}. Then 
$$
\norm{P_- (2\bpsi_\eps\bchi_0-(\bzeta_\eps-\bzeta))P_+}_2=O(1), \quad \eps\to+0.
$$
\end{lemma}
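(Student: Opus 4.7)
The plan is to exploit a cancellation at infinity between the two symbols $2\psi_\eps\chi_0$ and $\zeta_\eps-\zeta$. Set $\eta(t):=2\psi(t)-\zeta(t)$. Under \eqref{a6aa}, $\psi(t)=-\tfrac12\sign(t)$ for $|t|>R$, whereas $\zeta(t)=-\tfrac{2}{\pi}\arctan(t)\to-\sign(t)$ at infinity at rate $O(1/|t|)$. Hence $\eta\in C^\infty(\bbR)$ is bounded and $O(1/|t|)$ at infinity, so $\eta\in L^2(\bbR)$; moreover $\eta'=2\psi'-\zeta'\in L^2(\bbR)$, so $\eta\in H^1(\bbR)\subset H^{1/2}(\bbR)$.

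Choose $r>0$ so small that $\chi_0\equiv1$ on $[-r,r]$. For $\eps<r/R$ we have $|x/\eps|>R$ on $\supp(1-\chi_0)$, and therefore $2\psi_\eps(x)(1-\chi_0(x))=-\sign(x)(1-\chi_0(x))$. Writing $2\psi_\eps=\eta_\eps+\zeta_\eps$, a short algebraic manipulation yields, for all sufficiently small $\eps>0$,
\begin{equation*}
2\psi_\eps\chi_0-(\zeta_\eps-\zeta)=\eta_\eps+h,\qquad h(x):=\zeta(x)+\sign(x)(1-\chi_0(x)).
\end{equation*}
The function $h$ is $\eps$-independent, bounded, and $O(1/|x|)$ at infinity (so $h\in L^2$); since $\chi_0(0)=1$ the jump of $\sign$ at the origin is annihilated, and $h'=\zeta'-\sign(x)\chi_0'(x)\in L^2$, so $h\in H^1\subset H^{1/2}$.

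By the triangle inequality it remains to bound $\|P_-\pmb{\eta}_\eps P_+\|_2$ and $\|P_-\bh P_+\|_2$ separately. The first is handled by a scaling argument: the dilation $U_\eps f(x)=\eps^{-1/2}f(x/\eps)$ is unitary on $L^2(\bbR)$, commutes with $P_\pm$, and satisfies $U_\eps\pmb{\eta}U_\eps^{-1}=\pmb{\eta}_\eps$, so $\|P_-\pmb{\eta}_\eps P_+\|_2=\|P_-\pmb{\eta}\,P_+\|_2$ is $\eps$-independent. Both quantities are finite via the classical identity for scalar Hankel operators,
\begin{equation*}
\|P_-\bOmega P_+\|_2^2=\tfrac{1}{2\pi}\int_0^\infty\xi\,|\hat\Omega(-\xi)|^2\,d\xi\ \leq\ C\|\Omega\|_{H^{1/2}(\bbR)}^2,
\end{equation*}
applied to $\Omega=\eta$ and $\Omega=h$.

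The only subtle point -- and the reason the decomposition must be arranged in just this way -- is that one cannot estimate the three pieces $2\bpsi_\eps\bchi_0$, $\bzeta_\eps$, $\bzeta$ individually: since $\hat\zeta(\xi)=i\sqrt{2/\pi}\,\xi^{-1}e^{-|\xi|}$, the integral above diverges logarithmically at $\xi=0$ and so $\|P_-\bzeta P_+\|_2=\infty$. The passage to $\eta_\eps+h$ is therefore essential: it bundles this divergent piece of $\bzeta$ together with a cancelling contribution coming from $2\bpsi_\eps\bchi_0$, delivered by the exact identity $2\psi_\eps(x)=-\sign(x)$ for $|x|>\eps R$ guaranteed by \eqref{a6aa}.
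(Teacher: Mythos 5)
Your decomposition is exactly the paper's: writing $2\psi_\eps\chi_0-(\zeta_\eps-\zeta)=(2\psi_\eps-\zeta_\eps)+(\zeta+\sign\cdot(1-\chi_0))$ for small $\eps$ and treating the two pieces separately, the second being $\eps$-independent and the first controlled by scale invariance. The only cosmetic difference is that you verify Hilbert--Schmidt-ness via the Fourier-side identity $\norm{P_-\bOmega P_+}_2^2=\tfrac1{2\pi}\int_0^\infty \xi\,\abs{\hat\Omega(-\xi)}^2\,d\xi$ and an $H^{1/2}$ bound, whereas the paper checks directly that the commutator kernel $\frac{\f(x)-\f(y)}{x-y}$ lies in $L^2(\bbR^2)$ from the decay $\f=O(1/x)$, $\f'=O(1/x^2)$ -- equivalent standard criteria for a scalar Hankel operator to be Hilbert--Schmidt.
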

\begin{proof}
Denote, as in Section~\ref{sec.d}, 
$\chi_\infty=1-\chi_0$ and $\psi_0(\lambda)=-\tfrac12\sign(\lambda)$.
We have (using \eqref{d6})
\begin{equation}
2\psi_\eps\chi_0-(\zeta_\eps-\zeta)
=
(2\psi_\eps-\zeta_\eps)+(\zeta-2\psi_\eps\chi_\infty)
=
(2\psi_\eps-\zeta_\eps)+(\zeta-2\psi_0\chi_\infty)
\label{h1}
\end{equation}
for all sufficiently small $\eps$. 
First consider the second term in the r.h.s. of \eqref{h1}. 
Denote $\f=\zeta-2\psi_0\chi_0$; we have
\begin{equation}
P_-\pmb\f P_+=P_-(\pmb\f P_+ - P_+\pmb\f),
\label{h1a}
\end{equation}
and therefore it suffices to prove the inclusion
\begin{equation}
\pmb\f P_+ - P_+\pmb\f\in\Sch_2.
\label{h2}
\end{equation}
We have
\begin{equation}
\f\in C^\infty(\bbR), \quad
\f(x)=O(1/x), \quad \f'(x)=O(1/x^2), \quad \abs{x}\to\infty.
\label{h3}
\end{equation}
The integral kernel of $\pmb\f P_+ - P_+\pmb\f$
is 
\begin{equation}
-\frac1{2\pi i}\frac{\f(x)-\f(y)}{x-y}, \quad x,y\in \bbR.
\label{h4}
\end{equation}
It is an elementary calculation to check that 
conditions \eqref{h3} imply that the kernel \eqref{h4} 
belongs to $L^2(\bbR^2,dx dy)$. 
Thus, we obtain the inclusion \eqref{h2}. 

Next, consider the first term in the r.h.s. of \eqref{h1}.
Denote $\f=2\psi-\zeta$; 
by \eqref{h1a} it suffices to prove that the norm 
$\norm{\pmb \f_\eps P_+-P_+\pmb \f_\eps}_2$ is uniformly bounded. 
Again,  $\f$ 
satisfies \eqref{h3} and we have
$$
\int_\bbR\int_\bbR\Abs{\frac{\f(x/\eps)-\f(y/\eps)}{x-y}}^2 dxdy
=
\int_\bbR\int_\bbR\Abs{\frac{\f(x)-\f(y)}{x-y}}^2 dxdy,
$$
which is finite and independent of $\eps$. 
\end{proof}

\begin{proof}[Proof of Lemma~\ref{cr.c7}]
Since 
$$
D_\eps^{(4)}-D_\eps^{(5)}=\SHO(\Omega_\eps^{(4)}-\Omega_\eps^{(5)}),
$$
and
$$
\Omega_\eps^{(4)}-\Omega_\eps^{(5)}
=
\bigl(2\psi_\eps\chi_0-
(\zeta_\eps-\zeta )\bigr)
\tfrac12(S(0)-I), 
$$
we obtain (for $q=\max\{p,2\}$)
\begin{multline*}
\norm{D_\eps^{(4)}-D_\eps^{(5)}}_q
\leq
2\norm{P_-(\bOmega_\eps^{(4)}-\bOmega_\eps^{(5)})P_+}_q
\\
\leq
2\norm{P_-(2\bpsi_\eps\bchi_0-(\bzeta_\eps-\bzeta ))P_+}_q
\norm{\tfrac12(S(0)-I)}_q
\\
\leq
2\norm{P_-(2\bpsi_\eps\bchi_0-(\bzeta_\eps-\bzeta ))P_+}_2
\norm{\tfrac12(S(0)-I)}_p,
\end{multline*}
which is uniformly bounded by Lemma~\ref{lma.c6}.
\end{proof}

\section{Computing $\Tr(D_\eps^{(5)})^m$}\label{sec.i}
\begin{proof}[Proof of Lemma~\ref{lma.c8}]

\emph{Step 1:} let us prove that 
$\abs{P_-(\bzeta_\eps-\bzeta)P_+}=K_\eps$ for all $\eps\in(0,1)$ 
and that $K_\eps$ is trace class. 
We denote by $\Phi$ the standard unitary Fourier transform in $L^2(\bbR)$,
$$
(\Phi f)(t)=\frac1{\sqrt{2\pi}}\int_{-\infty}^\infty f(x)e^{-itx}dx.
$$
We have $\Phi (H^2_+(\bbR))=L^2(\bbR_+)$.
Denote 
$$
\wh K_\eps=\Phi K_\eps \Phi^* 
\quad\text{ in $L^2(\bbR_+)$.}
$$
The operator $\wh K_\eps$ is an integral operator with the kernel $k_\eps(t+s)$, 
$t,s\in\bbR_+$, where 
$$
k_\eps(t)=-\frac{i}{2\pi}\int_{-\infty}^\infty(\zeta_\eps(x)-\zeta(x))e^{-ixt}dx, 
\quad 
t>0.
$$
Recalling the explicit formula  \eqref{c26} for $\zeta$ and 
integrating by parts, we obtain
\begin{multline*}
k_\eps(t)
=
\frac{i}{\pi^2}\int_{-\infty}^\infty 
(\tan^{-1}(x/\eps)-\tan^{-1}(x))
e^{-ixt}dx
\\
=
-\frac{1}{\pi^2 t}\int_{-\infty}^\infty 
(\tan^{-1}(x/\eps)-\tan^{-1}(x))
(\tfrac{d}{dx}e^{-ixt})dx
\\
=
\frac{1}{\pi^2 t}\int_{-\infty}^\infty 
\left(\frac{1/\eps}{1+x^2/\eps^2}-\frac{1}{1+x^2}\right)e^{-ixt}dx
=
\frac{e^{-\eps t}-e^{-t}}{\pi t}.
\end{multline*}
Observe that the operator with this kernel 
 may be represented as
\begin{equation}
\wh K_\eps
=
\tfrac1\pi \calL \1_{(\eps,1)}\calL
=
\tfrac1\pi
(\1_{(\eps,1)}\calL)^*(\1_{(\eps,1)}\calL),
\label{i4}
\end{equation}
where $\calL$ is the Laplace transform in $L^2(\bbR_+)$, 
$$
(\calL f)(x)=\int_0^\infty e^{-tx}f(t)dt, 
$$
and $\1_{(\eps,1)}$ is the operator of multiplication by the characteristic function
of $(\eps,1)$ in $L^2(\bbR_+)$.
From \eqref{i4} it follows that $\wh K_\eps\geq0$ and therefore $K_\eps\geq0$.
In particular, this means that $\abs{K_\eps}=\sqrt{K_\eps^*K_\eps}=K_\eps$.
Since 
$$
P_-(\bzeta_\eps-\bzeta)P_+
=
JP_+J(\bzeta_\eps-\bzeta)P_+
=
-JP_+(\bzeta_\eps-\bzeta)JP_+
=-iJK_\eps,
$$
it follows that 
$$
\abs{P_-(\bzeta_\eps-\bzeta)P_+}=\abs{-iJK_\eps}
=
\sqrt{(-iJK_\eps)^*(-iJK_\eps)}=\abs{K_\eps}=K_\eps.
$$
Finally, by inspection, $\1_{(\eps,1)}\calL$ is a Hilbert-Schmidt operator
and therefore $\wh K_\eps$ is trace class. 

\emph{Step 2:}
We need to study the asymptotics of the traces
$\Tr K_\eps^m=\Tr\wh K_\eps^m$ and to prove that formula \eqref{c31} 
holds true for all natural $m$. 
This has been done in \cite{FP}; let us briefly recall the key steps of this argument. 
By \eqref{i4} and by the cyclicity of trace, we obtain
$$
\Tr {\wh K_\eps}^m
=
\Tr( \1_{(\eps,1)}(\tfrac1\pi\calL^{2}) \1_{(\eps,1)})^m.
$$
Next, we may apply the
the result of \cite{LaSa}:
\begin{equation}
\abs{
\Tr g( \1_{(\eps,1)}(\tfrac1\pi \calL^{2}) \1_{(\eps,1)})
-
\Tr \1_{(\eps,1)}g(\tfrac1\pi\calL^2)\1_{(\eps,1)}}
\leq
\frac12\norm{g''}_{L^\infty(0,1)}\norm{[\tfrac1\pi\calL^2,\1_{(\eps,1)}]}_2^2
\label{i5}
\end{equation}
for any $g\in C^2$ with $g(0)=0$; this result only uses the fact that 
$\1_{(\eps,1)}$ is an orthogonal projection and $\tfrac1\pi\calL^2$ is a bounded operator 
with the spectrum on the interval $[0,1]$.   We use \eqref{i5} this with $g(t)=t^m$. 
A direct calculation shows that the r.h.s. in \eqref{i5} is bounded as $\eps\to+0$.

Next, observe that $\calL^2$ is the Carleman operator, i.e. the 
integral operator in $L^2(\bbR_+)$ with the integral kernel 
$1/(s+t)$, $s,t\in\bbR$. Using the 
diagonalisation of the Carleman operator, one can compute the power
$(\tfrac1\pi\calL^2)^m$, which yields \cite{FP}
$$
\Tr (\1_{(\eps,1)}(\tfrac1\pi\calL^2)^{m} \1_{(\eps,1)})
=
\abs{\log \eps}\frac1{2\pi^2}\int_{-\infty}^\infty(\cosh x)^{-m}dx
+
O(1), \quad \eps\to+0
$$
for all $m\in\bbN$.
This gives the required result. 
\end{proof}

\section{Putting it all together: proof of Theorem~\ref{thm.b1}}\label{sec.j}

Fix an even integer $q$ such that $q\geq 2p$, $q>1/\gamma$. 

\emph{Step 1: assume \eqref{a6aa} and let  
$g(t)=t^m$, $m\in\bbN$, $m\geq q$.}

We follow the strategy outlined in 
Section~\ref{sec.c1}. By Lemmas~\ref{lma.c2}, \ref{lma.c4}, \ref{lma.c5} and \ref{cr.c7}, 
we have
\begin{equation}
\norm{D_\eps^{(i)}-D_\eps^{(i-1)}}_q=O(1), \quad \eps\to+0
\label{j1}
\end{equation}
for $i=1,3,4,5$. 
Recall that our aim is to prove the implication 
$$
P^{(i)} + \eqref{j1} \Rightarrow P^{(i-1)}
$$
for $i=1,3,4,5$, where $P^{(i)}$ is the statement
$$
P^{(i)}\!\!: \qquad 
\abs{\log\eps}^{-1}\Tr(D_\eps^{(i)})^m\to\Delta_m, 
\quad \eps\to+0, 
\quad \forall m\geq q.
$$
It suffices to prove that $P^{(i)}$ and \eqref{j1} imply
$$
\Tr(D_\eps^{(i)})^m
-
\Tr(D_\eps^{(i-1)})^m
=o(\abs{\log\eps}), \quad \eps\to+0.
$$
We use the following simple operator theoretic estimates:

(a) If $X\in\Sch_q$, $q\geq1$, then for all $m\geq q$
\begin{equation}
\norm{X}_m^m
=
\norm{\abs{X}^m}_1
\leq
\norm{X}^{m-q}\norm{\abs{X}^q}_1
=
\norm{X}^{m-q}\norm{X}_q^q.
\label{j3}
\end{equation}

(b) If $X,Y\in\Sch_m$, then 
\begin{equation}
\abs{\Tr X^m-\Tr Y^m}
\leq
m \norm{X-Y}_m\max\{\norm{X}_m^{m-1},\norm{Y}_m^{m-1}\}.
\label{j4}
\end{equation}
To prove \eqref{j4}, observe that  by cyclicity of the trace one has
$$
\Tr X^m - \Tr {Y}^m 
= 
\Tr (X-{Y})(X^{m-1} 
+ 
X^{m-2} {Y} + \cdots + X {Y}^{m-2} + {Y}^{m-1}),
$$
and so \eqref{j4} follows  by the application of the H\"older 
and the triangle inequality for Schatten classes.

We will take $X=D_\eps^{(i)}$, $Y=D_\eps^{(i-1)}$. 
By construction, we have a uniform bound of the operator norms:
\begin{equation}
\norm{D_\eps^{(i)}}=O(1), \quad \eps\to+0
\label{j10}
\end{equation}
for all $i=1,2,3,4,5$.
Next,  $P^{(i)}$ with $m=q$ gives (using that $q$ is an even integer)
$$
\norm{D_\eps^{(i)}}_q^q
=
\Tr(D_\eps^{(i)})^q
=
O(\abs{\log\eps}), \quad \eps\to+0.
$$
From here by \eqref{j3} we obtain 
$\norm{D_\eps^{(i)}}_m^m=O(\abs{\log\eps})$ for all $m\geq q$ and so
\begin{equation}
\norm{D_\eps^{(i)}}_m
=
O(\abs{\log\eps}^{1/m}), 
\quad \eps\to+0,
\quad \forall m\geq q.
\label{j5}
\end{equation}
By \eqref{j1}, \eqref{j3} and \eqref{j10} we get 
\begin{equation}
\norm{D_\eps^{(i)}-D_\eps^{(i-1)}}_{m}^m
\leq
\norm{D_\eps^{(i)}-D_\eps^{(i-1)}}^{m-q}
\norm{D_\eps^{(i)}-D_\eps^{(i-1)}}_q^{q}
=
O(1), \quad \eps\to+0.
\label{j6}
\end{equation}
Combining \eqref{j5} with \eqref{j6}, we get
\begin{equation}
\norm{D_\eps^{(i-1)}}_m
\leq
\norm{D_\eps^{(i)}}_m
+
\norm{D_\eps^{(i-1)}-D_\eps^{(i)}}_{m}
=O(\abs{\log\eps}^{1/m}), \quad \eps\to+0.
\label{j7}
\end{equation}
Substituting \eqref{j5}, \eqref{j6} and \eqref{j7} into \eqref{j4}, we get
$$
\abs{\Tr(D_\eps^{(i)})^m
-
\Tr(D_\eps^{(i-1)})^m}
\leq
O(\abs{\log\eps}^{(m-1)/m})
=o(\abs{\log\eps}), \quad \eps\to+0,
$$
as required. 

\emph{Step 2: Let $g$ be a polynomial with $g(t)=O(t^q)$ as $t\to0$.}
Write $g(t)=\sum_m g_m t^m$; by the previous step, we obtain
$$
\lim_{\eps\to+0}\abs{\log \eps}^{-1}\Tr g(D_\eps)
=
\sum_m g_m \Delta_m 
=
\frac1{\pi^2}\sum_{m\text{ even}} g_m \sum_{n=1}^N a_n(0)^m \int_{-\infty}^\infty (\cosh x)^{-m}dx.
$$
By the change of variable $y=a/\cosh x$, 
\begin{multline*}
a^m
\int_{-\infty}^\infty (\cosh x)^{-m}dx
=
2a^m\int_{0}^\infty (\cosh x)^{-m}dx
\\
=
2\int_0^a \frac{y^m}{y\sqrt{1-y^2/a^2}}dy
=
\int_{-a}^a \frac{y^m}{\abs{y}\sqrt{1-y^2/a^2}}dy
\end{multline*}
for even $m$, and so we obtain
$$
\lim_{\eps\to+0}\abs{\log \eps}^{-1}\Tr g(D_\eps)
=
\int_{-1}^1 g(y) \mu_0(y)dy,
$$
where $\mu_0$ is the weight defined by \eqref{a9} with $\lambda=0$.

\emph{Step 3: 
assume \eqref{a6aa} and let  $g=\1_\omega$, where $\omega\subset\bbR$ is an open interval 
such that $0\notin\overline{\omega}$.}
Let $A=2\norm{\psi}_{L^\infty}$; then $\norm{D_\eps}\leq A$. 
Let $g_\pm$ be polynomials with $g_\pm(t)=O(t^q)$, $t\to0$, and 
$$
g_-(t)\leq \1_\omega(t)\leq g_+(t), \quad \abs{t}\leq A.
$$
Then 
$$
\Tr g_-(D_\eps)\leq \Tr \1_\omega(D_\eps)\leq \Tr g_+(D_\eps).
$$
By the previous step, it follows that 
\begin{gather*}
\limsup_{\eps\to+0}\abs{\log \eps}^{-1}\Tr \1_\omega(D_\eps)
\leq
\lim_{\eps\to+0}\abs{\log \eps}^{-1}\Tr g_+(D_\eps)
=
\int_{-1}^1 g_+(t)\mu_0(t)dt,
\\
\liminf_{\eps\to+0}\abs{\log \eps}^{-1}\Tr \1_\omega(D_\eps)
\geq
\lim_{\eps\to+0}\abs{\log \eps}^{-1}\Tr g_-(D_\eps)
=
\int_{-1}^1 g_-(t)\mu_0(t)dt.
\end{gather*}
Taking infimum over all possible polynomials $g_+$, 
supremum over all possible polynomials $g_-$ and using
the Weierstrass' approximation theorem, we obtain the 
required asymptotic relation \eqref{b7} for $g=\1_\omega$.

\emph{Step 4: the general case.}

Let $g=\1_\omega$, where $\omega$ is as above, and now we suppose that 
$\psi$ satisfies \eqref{a6} instead of the stronger condition \eqref{a6aa}.
It suffices to consider the cases $\omega=(a,\infty)$ and $\omega=(-\infty,-a)$ with 
$a>0$. We consider the first case; the second one can be treated in the same way. 
Given any $d\in(0,a)$, let us represent $\psi=\psi^{(0)}+\psi^{(1)}$, where
$\psi^{(0)}$ satisfies the stronger condition \eqref{a6aa}, and $\norm{\psi^{(1)}}_{L^\infty}<d/2$.
Then 
$$
D_\eps
=
(\psi_\eps^{(0)}(H)-\psi_\eps^{(0)}(H_0))
-
(\psi_\eps^{(1)}(H)-\psi_\eps^{(1)}(H_0)),
$$
where $\norm{\psi_\eps^{(1)}(H)-\psi_\eps^{(1)}(H_0)}<d$, and therefore
$$
\psi_\eps^{(0)}(H)-\psi_\eps^{(0)}(H_0)-d\cdot I
\leq
D_\eps
\leq
\psi_\eps^{(0)}(H)-\psi_\eps^{(0)}(H_0)+d\cdot I.
$$
By the min-max, we have
$$
\Tr\1_{(a+d,\infty)}(\psi_\eps^{(0)}(H)-\psi_\eps^{(0)}(H_0))
\leq
\Tr \1_{(a,\infty)}(D_\eps)
\leq
\Tr\1_{(a-d,\infty)}(\psi_\eps^{(0)}(H)-\psi_\eps^{(0)}(H_0)).
$$
Applying the previous step of the proof and subsequently letting 
$d\to0$, we arrive at the required result. 

The case of a continuous $g$ which vanishes near the origin
follows by approximating $g$ from above and from below
by step-functions.

\section{Some applications}\label{sec.x}
In this section we give some examples of application of Theorem~\ref{thm.b1} 
to Schr\"odinger operators.

\subsection{Zero background potential}\label{sec.x1}

Let 
$$
H_0=-\Delta, \quad
H=-\Delta+V
\quad
\text{ in }
\calH=L^2(\bbR^d), \quad d\geq1,
$$ 
where
the real-valued potential $V$ satisfies
\begin{equation}
\abs{V(x)}\leq C(1+\abs{x})^{-\rho}, \quad \rho>1.
\label{b9}
\end{equation}
Let $\calK=\calH$,  and let $G$ be the operator of multiplication by
$G(x)=(1+\abs{x})^{-\rho/2}$; then $V_0=V_0(x)(1+\abs{x})^{\rho}$.  
The statement below is essentially well known. 

\begin{proposition}\label{pr.b2}
Assume \eqref{b9}. 
Then for any finite interval $\delta=(\lambda_1,\lambda_2)$ with $0<\lambda_1<\lambda_2<\infty$, 
for any $p>\max\{d/\rho,(d-1)/(\rho-1)\}$ and for any $k\in\bbN$ with $2pk>d$, Assumption~\ref{ass1} holds true. 
Moreover, for all $\lambda\in\delta$ the equation \eqref{b10}
has no non-trivial solutions and so $\delta_0=\delta$. 
\end{proposition}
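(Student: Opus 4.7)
The plan is to verify each of the four clauses of Assumption~\ref{ass1} by working in the Fourier representation of $H_0=-\Delta$, and then to dispose of the homogeneous equation \eqref{b10} by classical arguments on the absence of positive eigenvalues and resonances.

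Clauses (1) and (2) are consequences of the Kato--Seiler--Simon inequality, which asserts that for $r\ge 2$ one has $\norm{f(x)g(-i\nabla)}_r\le C_r\norm{f}_{L^r}\norm{g}_{L^r}$. Applied to $G(x)=\jap{x}^{-\rho/2}$ and the Fourier multiplier $(|\xi|^2+b)^{-1/2}$, this immediately gives compactness in \eqref{b2}. For \eqref{b2a} I would expand $(H+b)^{-k}-(H_0+b)^{-k}$ into a telescoping sum of $k$ terms of the form $(H+b)^{-j}V(H_0+b)^{-(k-j+1)}$, split $V=G^*V_0G$, and redistribute the resolvent powers by the H\"older inequality \eqref{a14}. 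Each resulting factor $G\cdot(H_0+b)^{-s}$ lies in $\Sch_{2p}$ provided $\jap{x}^{-\rho/2}\in L^{2p}$, equivalently $p>d/\rho$, and $(|\xi|^2+b)^{-s}\in L^{2p}$, equivalently $4ps>d$; combined, these give the condition $2pk>d$ once the contributions of all $k$ summands are assembled. The resolvents $(H+b)^{-1}$ and $(H_0+b)^{-1}$ may be freely interchanged in the intermediate factors because their difference has already been shown to be compact.

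For clauses (3) and (4) the key object is the polar-coordinates Fourier representation of $-\Delta$,
$$(\calF f)(\lambda,\omega)=\tfrac{1}{\sqrt 2}\,\lambda^{(d-2)/4}\,\widehat f(\sqrt{\lambda}\,\omega),\quad\lambda>0,\ \omega\in S^{d-1},$$
which diagonalises $H_0$ with fiber $\calN=L^2(S^{d-1})$ of constant multiplicity ($N=\infty$ for $d\ge 2$; for $d=1$ one uses $\calN=\bbC^2$), so (3) holds on any $\delta\subset(0,\infty)$. In this representation the map $Z(\lambda)$ from \eqref{b3a} is, up to the Jacobian above, multiplication by $G$ composed with restriction of the Fourier transform to the sphere of radius $\sqrt\lambda$. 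Computing $Z(\lambda)Z(\lambda)^*$ as an integral operator on $S^{d-1}$ with kernel proportional to $\widehat{G^2}(\sqrt{\lambda}(\omega-\omega'))$, the sphere trace theorem combined with the decay of $G^2=\jap{x}^{-\rho}$ shows that this kernel defines an operator in $\Sch_p(L^2(S^{d-1}))$ whenever $\rho>1+(d-1)/p$, i.e.\ $p>(d-1)/(\rho-1)$; taking square roots gives $\norm{Z(\lambda)}_{2p}\le C$. Differentiating the kernel in $\lambda$ and estimating the corresponding Schatten norm produces the H\"older exponent $\gamma=\min\{1,\rho-1-(d-1)/p\}\in(0,1)$, establishing (4).

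The final claim $\delta_0=\delta$ is the assertion that for every $\lambda>0$ the equation \eqref{b10} admits only the trivial solution. Any nontrivial $f$ would, via $u=R_0(\lambda+i0)G^*V_0 f$, produce either a positive eigenfunction of $H$ or a ``positive resonance''. The former is excluded by Kato's theorem on absence of positive eigenvalues, valid because $\rho>1$ implies $V(x)=o(|x|^{-1})$ at infinity; the latter is excluded by the Agmon--Rellich unique continuation principle. I expect the main technical obstacle to be the sharp verification of clause (4): the arithmetic interplay between the decay rate $\rho$, the loss of regularity in the trace theorem on spheres, and the target Schatten exponent $2p$ is precisely what determines the threshold $p>(d-1)/(\rho-1)$, and one must choose the Fourier-analytic estimates carefully to realise it.
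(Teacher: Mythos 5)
The outline of what needs to be checked is the same, but your treatment of clause (2) contains a genuine gap, and the way you justify swapping $H$- and $H_0$-resolvents is not enough.

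For \eqref{b2a} you propose to telescope $(H+bI)^{-k}-(H_0+bI)^{-k}$ and then, inside each term $(H+bI)^{-j}V(H_0+bI)^{-(k-j+1)}$, split $V=G^*V_0G$ and put each factor $G(H_0+bI)^{-s}$ into $\Sch_{2p}$. By your own Kato--Seiler--Simon count, $G(-\Delta+bI)^{-s}\in\Sch_{2p}$ requires $4ps>d$. But in the telescoping sum the resolvent powers are split \emph{integrally}, so the boundary term $j=1$ (or $j=k$) has $s=1$ on one side, forcing $p>d/4$. This condition is not in the Proposition and can genuinely fail in its range: e.g.\ $d=20$, $\rho=10$, $p=2.2$, $k=5$ satisfies $p>\max\{d/\rho,(d-1)/(\rho-1)\}\approx 2.11$ and $2pk=22>d$, yet no H\"older splitting $1/r_1+1/r_2=1/p$ with $(H_0+bI)^{-1}G\in\Sch_{r_1}$ (which needs $r_1>d/2=10$) and $G(H_0+bI)^{-k}\in\Sch_{r_2}$ (which needs $r_2>4$) is possible, since $1/10+1/4<1/2.2$. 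The paper avoids this entirely: it applies Kato--Seiler--Simon to the \emph{unsplit} product $(1+\abs{x})^{-\rho}(-\Delta+bI)^{-k}$, which lies in $\Sch_p$ exactly under $\rho p>d$ and $2kp>d$, and then invokes the Schatten-class analogue of \cite[Theorem XII.12]{RS3} to pass from $V(H_0+bI)^{-k}\in\Sch_p$ to \eqref{b2a}. Keeping the decay $(1+\abs{x})^{-\rho}$ in one piece is precisely what realises the stated threshold $p>d/\rho$. There is a second, related weakness: saying that $(H+bI)^{-1}$ and $(H_0+bI)^{-1}$ "may be freely interchanged because their difference is compact" is not a Schatten-class statement; compactness of the difference gives no control on $\norm{G(H+bI)^{-j}}_{2p}$, so some bootstrap (as in the cited RS3 argument) is unavoidable.

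The rest of your plan is fine in outline, though it runs through different technical tools than the paper. For clause (4) you propose to estimate the kernel $\wh{G^2}\bigl(\sqrt\lambda(\omega-\omega')\bigr)$ of $Z(\lambda)Z(\lambda)^*$ on $\bbS^{d-1}$ directly; the paper instead interpolates between $p=1$ and $p=\infty$ (citing \cite[Lemma 8.1.8]{Yafaev2}), which is arguably cleaner but your route is viable and leads to the same exponent $p>(d-1)/(\rho-1)$. For $\delta_0=\delta$, substitute "the Agmon bootstrap plus Rellich's uniqueness theorem for outgoing solutions" in place of "Agmon--Rellich unique continuation"; unique continuation is a different device, and the point of the standard argument is to upgrade a hypothetical positive resonance to an $L^2$-eigenfunction, so that Kato's theorem applies.
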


Thus, in this case Theorem~\ref{thm.b1} holds true with any $\lambda>0$.

\begin{proof}
The inclusion 
\eqref{b2} is well-known. 
Further, by the Kato--Seiler--Simon bound  \cite[Theorem 4.1]{Simon},
we have for $b>0$
$$
(1+\abs{x})^{-\rho}(-\Delta+bI)^{-k}\in\Sch_{p}, \quad \rho p>d, \quad 2kp>d.
$$
Repeating word for word the argument of \cite[Theorem XII.12]{RS3}
for the class $\Sch_p$ instead of $\Sch_1$, we obtain the inclusion \eqref{b2a}.

Let us recall the formulas for the diagonalization $\calF_0$ of $H_0$.
The fiber space $\calN_d$ is $\calN_1=\bbC^2$ if $d=1$ and 
$\calN_d=L^2(\bbS^{d-1})$ if $d\geq2$. 
One defines the operator 
$$
\calF_0: L^2(\bbR^d)\to L^2((0,\infty); \calN_d)
$$
by 
\begin{align}
(\calF_0u)(\lambda)
&=\frac1{\sqrt{2}}\lambda^{-\frac{1}{4}}
(\hat u(\sqrt{\lambda}),\hat u(-\sqrt{\lambda})), 
\quad
\lambda\in(0,\infty), \quad d=1,
\label{x2}
\\
(\calF_0u)(\lambda,\omega)
&=\frac1{\sqrt{2}}\lambda^{\frac{d-2}{4}}\hat u(\sqrt{\lambda}\omega), 
\quad
\lambda\in(0,\infty), \quad \omega\in\bbS^{d-1}, \quad d\geq2,
\label{x1}
\end{align}
where $\hat u$ is the standard (unitary) Fourier transform of $u$. 
It is easy to see that $\calF_0$ diagonalises $H_0$. 
From \eqref{x2}, \eqref{x1} it is easy to obtain explicit formulas for the operator 
$Z_0(\lambda):L^2(\bbR^d)\to\calN_d$, defined by \eqref{b3a}. 
We have for any $\lambda>0$ 
\begin{align}
Z_0(\lambda)u
&=\frac1{\sqrt{2}}\lambda^{-\frac{1}{4}}
(\wh{Gu}(\sqrt{\lambda}),\wh{Gu}(-\sqrt{\lambda})), 
\quad d=1,
\label{x4}
\\
(Z_0(\lambda)u)(\omega)
&=\frac1{\sqrt{2}}\lambda^{\frac{d-2}{4}}\wh{Gu}(\sqrt{\lambda}\omega), 
\quad \omega\in\bbS^{d-1}, \quad d\geq2.
\notag
\end{align}
Thus, for $d=1$ the operator $Z_0(\lambda)$  has rank $\leq2$ and it is straightforward
to check that $Z_0(\lambda)$ is  H\"older continuous in $\lambda>0$. For $d\geq2$, it is also 
easy to prove 
the H\"older continuity of $Z_0(\lambda)$ in $\Sch_{2p}$ norm, $p>\frac{d-1}{\rho-1}$; 
this can be done by interpolating between the cases $p=1$ and $p=\infty$. 
See e.g. \cite[Lemma 8.1.8]{Yafaev2} for the details. 

Finally, the fact that equation \eqref{b10} has only trivial solutions 
for $\lambda>0$, follows by the argument 
involving Agmon's ``bootstrap'' and Kato's theorem on the absence of positive
eigenvalues of $H$; see e.g. \cite[Section 1.9]{Yafaev2} 
for an exposition of this argument.
\end{proof}

\subsection{Constant homogeneous magnetic field in three dimensions}\label{sec.x2}

Let us fix $B=\const>0$ and define the operator
\begin{equation}
H_0=(-i\nabla-A(x))^2-B \quad\text{ in $\calH=L^2(\bbR^3)$},
\label{x5}
\end{equation}
where
$$
A(x)=(-\tfrac12Bx_2,\tfrac12Bx_1,0)
$$
is the vector-potential corresponding to the constant homogeneous
magnetic field $(0,0,B)$ in $\bbR^3$. 
The term $-B$ in \eqref{x5} is added only for the purposes of normalization;
with this normalization, the spectrum of $H_0$ coincides with the interval $[0,\infty)$. 

The operator $H_0$ can be written as 
$H_0=h_0+(-\frac{\partial^2}{\partial x_3^2})$, where $h_0$ is the two-dimensional magnetic operator,
$$
h_0
=
\biggl(-i\frac{\partial}{\partial x_1}+\frac12 Bx_2\biggr)^2
+
\biggl(-i\frac{\partial}{\partial x_2}-\frac12 Bx_1\biggr)^2
-B
\quad\text{ in $L^2(\bbR^2)$.}
$$
The spectrum of $h_0$ consists of the Landau levels $\{0,2B,4B,\dots\}$,
which are the eigenvalues of $h_0$ of infinite multiplicity.
The Landau levels play the role of thresholds in the spectrum of $H_0$.
We set $H=H_0+V$, where the real-valued potential $V$ satisfies
\begin{equation}
\abs{V(x)}\leq 
C (1+\abs{x_1}+\abs{x_2})^{-\rho_\perp}
(1+\abs{x_3})^{-\rho}, 
\label{x6}
\end{equation}
with 
\begin{equation}
\rho>1 \quad\text{ and }\quad 0<\rho_\perp\leq\rho.
\label{x7}
\end{equation} 
Let $G$ be the operator of multiplication by 
$G(x)=G_\perp(x_1,x_2)G_3(x_3)$, where
$$
G_\perp(x_1,x_2)=(1+\abs{x_1}+\abs{x_2})^{-\rho_\perp/2},
\quad
G_3(x_3)=(1+\abs{x_3})^{-\rho/2}.
$$
\begin{proposition}\label{prp.x2}
Assume \eqref{x6}, \eqref{x7} 
and let $\delta\subset(0,\infty)$ be an open interval which does not contain any 
of the Landau levels $\{2nB\}_{n=0}^\infty$. Then Assumption~\ref{ass1} is satisfied
with $k=1$ and with any integer exponent $p>\max\{2,2/\rho_\perp\}$.
Let $\delta_*\subset\delta$ be the set of $\lambda\in\delta$ 
 where the equation \eqref{b10} has a non-trivial solution. 
 Then $\delta_*$ coincides with the set of eigenvalues of $H$ in $\delta$. 
\end{proposition}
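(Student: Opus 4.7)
The plan is to verify the four parts of Assumption~\ref{ass1} in order and then identify $\delta_*$ with the eigenvalues of $H$ in $\delta$. I would first establish the inclusion \eqref{b2} by exploiting the tensor decomposition $L^2(\bbR^3)=L^2(\bbR^2)\otimes L^2(\bbR)$, in which $H_0=h_0\otimes I+I\otimes(-\partial_{x_3}^2)$, and the product structure $G(x)=G_\perp(x_1,x_2)G_3(x_3)$. A Kato--Seiler--Simon argument in the $x_3$ variable combined with a standard compactness result for $G_\perp(h_0+1)^{-\alpha}$ (which follows from the fact that Landau eigenprojections are integral operators with rapidly decaying kernels in transverse variables) delivers compactness of $G(H_0+bI)^{-1/2}$. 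For \eqref{b2a} with $k=1$, I would insert the resolvent identity $R(-b)-R_0(-b)=-R_0(-b)G^*V_0 GR(-b)$ and estimate using Schatten-norm Hölder, reducing matters to $GR_0(-b)\in\Sch_{2p}$. The latter is proved level by level in the Landau expansion: on each level $n$, the operator $G_\perp \Pi_n\otimes G_3(-\partial^2_{x_3}+2nB+b)^{-1}$ is treated by classical Kato--Seiler--Simon, with summability in $n$ coming from $\rho_\perp>0$ and the choice $p>2/\rho_\perp$.

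Next I would set up the diagonalization of $H_0$. Let $\{\Pi_n\}_{n\geq 0}$ be the projections onto the Landau levels $2nB$ in $L^2(\bbR^2)$ and let $\calF_3$ be the one-dimensional Fourier transform in $x_3$ conjugated with the $\sqrt{\mu}$-change of variable as in \eqref{x2}. On the interval $\delta$ the set $\{n:2nB<\lambda\}$ is a fixed finite set $\{0,1,\dots,n_\delta\}$, and the fiber space
\[
\calN_\delta=\bigoplus_{n=0}^{n_\delta}\Ran\Pi_n\otimes\bbC^2
\]
is infinite-dimensional but independent of $\lambda\in\delta$, giving Assumption~\ref{ass1}(3). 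The operator $Z(\lambda)$ of \eqref{b3a} then reads, on its $n$-th component,
\[
(Z(\lambda)u)_{n,\pm}
=\frac{1}{\sqrt 2}(\lambda-2nB)^{-1/4}\,
\Pi_n\bigl(\calF_3(Gu)\bigr)(\pm\sqrt{\lambda-2nB}),
\]
where the partial Fourier transform is taken in $x_3$.

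The main technical obstacle is condition \eqref{b5}: verifying $\Sch_{2p}$-valued Hölder continuity of $Z(\lambda)$ on $\overline{\delta}$. The finite sum over $n\leq n_\delta$ means it suffices to treat each channel separately. Within a single channel, the operator factorizes as the composition of $\Pi_n G_\perp$ acting in $L^2(\bbR^2)$ (whose $\Sch_{2p}$ norm is finite when $\rho_\perp p>2$, by a Kato--Seiler--Simon estimate using that Hermite functions are Schwartz) and a scalar evaluation map $u_3\mapsto \wh{G_3 u_3}(\pm\sqrt{\lambda-2nB})$ in $x_3$, whose Hölder continuity in $\lambda$ follows from $\rho>1$ and the standard interpolation between $p=1$ and $p=\infty$, exactly as in \cite[Lemma~8.1.8]{Yafaev2}. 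Bounding $\lambda$ away from each Landau level (so that $(\lambda-2nB)^{-1/4}$ stays bounded) and from $\infty$ is crucial and is the reason $\delta$ is taken as in the statement.

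Finally, for the characterization of $\delta_*$, I would argue as in the non-magnetic case. If $f\in\calK$ solves \eqref{b10} at some $\lambda\in\delta$, set $u=-R_0(\lambda+i0)G^*V_0 f$; then formally $u$ satisfies $(H-\lambda)u=0$. The point is to show $u\in L^2(\bbR^3)$, which is the Agmon bootstrap: using the explicit formula for $R_0(\lambda+i0)$ in the Landau expansion (each channel reduces to a one-dimensional resolvent with explicit kernel $e^{i\sqrt{\lambda-2nB}|x_3-y_3|}/(2i\sqrt{\lambda-2nB})$), the local regularity of $u$ follows from the $V$-bounds, and its decay is upgraded iteratively from the decay of $V$. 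Then Kato's theorem on absence of embedded eigenvalues plays no role here (we want to \emph{allow} eigenvalues), and conversely any $L^2$ eigenfunction of $H$ at $\lambda$ yields a non-trivial solution of \eqref{b10} via $f=V_0 Gu$. This closes the identification $\delta_*=\spec_{\rm pp}(H)\cap\delta$.
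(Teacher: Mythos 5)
Your overall structure matches the paper's, but for Assumption~\ref{ass1}(1)--(2) you take a genuinely different route. The paper never touches the Landau decomposition for these items: it observes $G\in L^{2p}(\bbR^3)$, applies Kato--Seiler--Simon to $G(-\Delta+bI)^{-1}$, and then invokes the diamagnetic inequality (in the form of \cite[Section~2]{AHS}) to transfer the $\Sch_{2p}$ membership to $G(H_0+bI)^{-1}$; this is also precisely where the hypothesis that $p$ be an \emph{integer} is used, since the diamagnetic comparison for Schatten norms is only straightforward when $\Tr|A|^{2p}$ expands as an integral of a $2p$-fold product. Your channel-by-channel expansion
\[
G(H_0+bI)^{-1}=\sum_{n\geq0}\bigl[G_\perp\Pi_n\otimes G_3(-\partial_{x_3}^2+2nB+b)^{-1}\bigr]
\]
is legal (the factors have pairwise orthogonal initial subspaces, so $\Sch_{2p}$-norms add in the $2p$-th power), and it would if carried through yield a stronger conclusion not requiring $p$ integer. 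But this is exactly where you create a gap that the paper avoids: you need
\[
\sum_{n}\norm{G_\perp\Pi_n}_{2p}^{2p}\,\norm{G_3(-\partial_{x_3}^2+2nB+b)^{-1}}_{2p}^{2p}<\infty,
\]
and the longitudinal factor decays in $n$, so what is needed is a uniform (or at least slowly growing) bound on $\norm{G_\perp\Pi_n}_{\Sch_{2p}}$ over all Landau levels $n$, not just the finitely many levels below $\sup\delta$. This is a real fact but nontrivial: one has the clean $n$-independent identity $\norm{G_\perp\Pi_n}_{\Sch_2}^2=\tfrac{B}{2\pi}\norm{G_\perp}_{L^2}^2$ (from $\Pi_n(x,x)\equiv B/2\pi$) and the trivial operator-norm bound $\norm{G_\perp\Pi_n}\leq\norm{G_\perp}_{L^\infty}$, and interpolation then gives uniformity for $\Sch_q$, $q\geq2$ --- but only when $G_\perp\in L^2$, i.e.\ when $\rho_\perp>1$. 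When $0<\rho_\perp\leq1$ you have $G_\perp\notin L^2(\bbR^2)$, the $\Sch_2$ endpoint is unavailable, and you would need a different argument (analogous to \cite[Lemma~3.1]{FR}, but uniform in $n$). Your proposal states that "summability in $n$ comes from $\rho_\perp>0$ and $p>2/\rho_\perp$," which conflates the issue: those conditions ensure finiteness of each individual transverse factor (as in the paper's single-channel treatment of Assumption~(4)), not their control as $n\to\infty$. Your verification of Assumption~(4) is essentially the paper's, extended from one channel to the finite direct sum over $n\leq n_\delta$ (the paper simply notes it restricts to $\delta\subset(0,2B)$ for notational convenience), and your $\delta_*$ argument is a sound sketch of the Agmon bootstrap which the paper replaces by a citation to \cite[Section~4]{AHS}.
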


We note that the set of the eigenvalues of $H$ in $\delta$ may be non-empty, see
examples in \cite[Section 5]{AHS}. 
In any case, we see that Theorem~\ref{thm.b1} holds true for any 
$$
\lambda>0, \quad \lambda\notin(\{2nB\}_{n=0}^\infty\cup \sigma_p(H)).
$$

\begin{proof}
(1)
The inclusion \eqref{b2} follows from the diamagnetic inequality,
see e.g. \cite[Section 2]{AHS}.

(2)
By our assumptions on $p$, we have $G\in L^{2p}(\bbR^3)$. 
By  the Kato--Seiler--Simon bound  \cite[Theorem 4.1]{Simon} it follows that 
$$
G(-\Delta+bI)^{-1}\in \Sch_{2p}, \quad b>0.
$$
Since $p$ is assumed to be an integer, by the diamagnetic inequality this implies
(see \cite[Section 2]{AHS}) that 
$$
G(H_0+bI)^{-1}\in \Sch_{2p}, \quad b>0.
$$
An application of the resolvent identity (see \eqref{e1b}) 
and of the Holder inequality for the Schatten classes (see \eqref{a14}) shows that
$$
(H+bI)^{-1}-(H_0+bI)^{-1}\in \Sch_p
$$
for $b>0$, $b>-\inf\sigma(H)$,
as required. 

(3)
As already mentioned, $H_0$ has a purely a.c. spectrum $[0,\infty)$ with multiplicity $N=\infty$.

(4)
Let us recall the explicit diagonalisation of $H_0$. 
Let $P_n$ be the orthogonal projection in $L^2(\bbR^2)$ onto the eigenspace
$\Ker (h_0-2nB)$ of $h_0$. 
Then 
$$
h_0=\sum_{n=0}^\infty 2nB P_n
$$
and so we get
$$
\varphi(H_0)=\sum_{n=0}^\infty P_n\otimes\varphi(-\tfrac{d^2}{dx_3^2}+2nB)
\quad
\text{ in $L^2(\bbR^3)=L^2(\bbR^2)\otimes L^2(\bbR)$}
$$
for any bounded function $\varphi$. 
The operator $-\frac{d^2}{dx_3^2}$ in $L^2(\bbR)$ is diagonalised by the map $\calF_0$, 
see \eqref{x2}. This allows one to diagonalise $H_0$. In order to display the 
corresponding formulas, let us assume for simplicity of notation that 
$\delta\subset(0,2B)$;
this corresponds to only one scattering channel being open on $\delta$. 
Then the diagonalisation operator $\calF$ for $H_0$ associated with the interval $\delta$ 
can be written as 
$$
\calF: \Ran \1_\delta(H_0)\mapsto L^2(\delta,\calN), 
\quad 
\calN=\Ran P_0\otimes\bbC^2,
$$
$$
\calF=P_0\otimes\calF_0.
$$
Further, 
let $Z_0(\lambda): L^2(\bbR)\to\bbC^2$ be the operator \eqref{x4} with $G=G_3$. 
Then the corresponding operator $Z(\lambda)$ for $H_0$ can be written as
$$
Z(\lambda): L^2(\bbR^3)\to\calN,\quad Z(\lambda)=P_0G_\perp\otimes Z_0(\lambda), \quad \lambda\in \delta.
$$
By our assumptions on the exponent $p$, we have 
$G_\perp\in L^{2p}(\bbR^2)$, and therefore $P_0G_\perp\in \Sch_{2p}$ 
(this can be proven by interpolating between the cases $p=2$ and $p=\infty$; see
e.g. \cite[Lemma 3.1]{FR}).
In the proof of Proposition~\ref{pr.b2} we have already seen that  
$Z_0(\lambda)$ is a rank two operator which is H\"older continuous in $\lambda>0$.
From here we obtain the strong $\Sch_{2p}$-smoothness assumption for $Z(\lambda)$. 

Finally, the required statement about $\delta_*$ was proven in \cite[Section 4]{AHS}.
\end{proof}

\subsection{Periodic operators}\label{sec.x3}
Let $H_0=-\Delta+W$ be the Schr\"odinger operator in $L^2(\bbR^d)$, $d\geq1$, with a periodic 
background potential $W$, and let $H=H_0+V$, where $V$ satisfies the estimate
\eqref{b9} with $\rho>1$. 
One can apply Theorem~\ref{thm.b1} to this pair of operators. 
In the one-dimensional case $d=1$ this is an easy exercise by using the 
standard facts about the diagonalisation of $H_0$, see e.g. \cite[Section XIII.16]{RS4}. 
In the multi-dimensional case, the verification of Assumption~\ref{ass1} 
meets the following difficulty: the local structure
of the energy band functions of the operator $H_0$ is not fully understood. 
As a consequence, the required facts about the diagonalisation operator $\calF$ are
not available for all energies. This issue can be overcome by making 
rather restrictive assumptions on the behaviour of the energy band functions, 
see e.g. \cite{BYa}.

\section*{Acknowledgements}
The author is grateful to Rupert Frank and Yuri Safarov for useful remarks.

\end{document}